\newtheorem{theorem}{Theorem}
\newtheorem{lemma}[theorem]{Lemma}
\newtheorem{proposition}[theorem]{Proposition}
\newtheorem{definition}{Definition}
\newtheorem{problem}{Problem}
\newcommand{\el}{\mathrm{expaff}}
\newcommand{\ones}{\mathbf{1}}
\newcommand{\bQ}{\mathbf{Q}}
\newcommand{\cQ}{\mathcal{Q}}
\newcommand{\bR}{\mathbf{R}}
\newcommand{\bS}{\mathbf{S}}
\newcommand{\bA}{\mathbf{A}}
\newcommand{\bC}{\mathbf{C}}
\newcommand{\bZ}{\mathbf{Z}}
\newcommand{\cC}{\mathcal{C}}
\newcommand{\cL}{\mathcal{L}}
\newcommand{\cD}{\mathcal{D}}
\newcommand{\cF}{\mathcal{F}}
\newcommand{\bx}{\mathbf{x}}
\newcommand{\by}{\mathbf{y}}
\newcommand{\bz}{\mathbf{z}}
\newcommand{\bc}{\mathbf{c}}
\newcommand{\bd}{\mathbf{d}}
\newcommand{\bq}{\mathbf{q}}
\newcommand{\br}{\mathbf{r}}
\newcommand{\bs}{\mathbf{s}}
\newcommand{\bp}{\mathbf{p}}
\newcommand{\bu}{\mathbf{u}}
\newcommand{\bv}{\mathbf{v}}
\newcommand{\bw}{\mathbf{w}}
\newcommand{\bg}{\mathbf{g}}
\newcommand{\bF}{\mathbf{F}}
\newcommand{\bh}{\mathbf{h}}
\newcommand{\bM}{\mathbf{M}}
\newcommand{\bN}{\mathbf{N}}
\newcommand{\bzero}{\mathbf{0}}
\newcommand{\R}{\mathbb{R}}
\newcommand{\Sp}{\mathbb{S}}
\newcommand{\Sym}{\mathrm{Sym}}
\newcommand{\K}{\mathcal{K}}
\newcommand{\bnu}{\boldsymbol \nu}
\newcommand{\blambda}{\boldsymbol \lambda}
\newcommand{\bdelta}{\boldsymbol \delta}
\newcommand{\bmu}{\boldsymbol \mu}
\newcommand{\bzeta}{\boldsymbol \zeta}
\newcommand{\bbeta}{\boldsymbol \beta}
\newcommand{\btheta}{\boldsymbol \theta}
\newcommand{\xinit}{\mathcal{X}_\mathrm{initial}}
\newcommand{\xtarget}{\mathcal{X}_\mathrm{target}}
\title{Conic Geometric Programming}
\author{Venkat Chandrasekaran$^c$ and Parikshit Shah$^w$ \thanks{Email: venkatc@caltech.edu, pshah@discovery.wisc.edu} \vspace{0.25in} \\ $^c$ Departments of Computing and Mathematical Sciences and of Electrical Engineering\\ California Institute of Technology \\ Pasadena, CA 91125  \vspace{0.1in} \\ $^w$ Wisconsin Institutes for Discovery\\ University of Wisconsin\\ Madison, WI 53715}
\date{October 10, 2013}
\begin{document}

\maketitle

\begin{abstract}
We introduce and study \emph{conic geometric programs} (CGPs), which are convex optimization problems that unify geometric programs (GPs) and conic optimization problems such as semidefinite programs (SDPs).  A CGP consists of a linear objective function that is to be minimized subject to affine constraints, convex conic constraints, and upper bound constraints on sums of exponential and affine functions.  The conic constraints are the central feature of conic programs such as SDPs, while upper bounds on combined exponential/affine functions are generalizations of the types of constraints found in GPs.  The dual of a CGP involves the maximization of the negative relative entropy between two nonnegative vectors jointly, subject to affine and conic constraints on the two vectors.  Although CGPs contain GPs and SDPs as special instances, computing global optima of CGPs is not much harder than solving GPs and SDPs.  More broadly, the CGP framework facilitates a range of new applications that fall outside the scope of SDPs and GPs.  Specifically, we demonstrate the utility of CGPs in providing solutions to problems such as permanent maximization, hitting-time estimation in dynamical systems, the computation of the capacity of channels transmitting quantum information, and robust optimization formulations of GPs.
\end{abstract}

\textbf{Keywords}: convex optimization; semidefinite programming; permanent; robust optimization; quantum information; Von-Neumann entropy.

\section{Introduction} \label{sec:intro}
Geometric programs (GPs) \cite{BoyKVH2007,DufPZ1967} and semidefinite programs (SDPs) \cite{VanB1994} are prominent classes of structured convex optimization problems that each generalize linear programs (LPs) in different ways.  By virtue of convexity, GPs and SDPs possess favorable analytical and computational properties: a rich duality theory and polynomial-time algorithms for computing globally optimal solutions.  Further, due to their flexible modeling capabilities, GPs and SDPs are useful in a range of problems throughout the sciences and engineering.  Some prominent applications of SDPs include relaxations for combinatorial optimization \cite{Ali1995,GoeW1995,KarMS1998,Vaz2004}, ellipsoid volume optimization via determinant maximization \cite{VanBW1998}, statistical estimation \cite{Sha1982}, problems in control and systems theory \cite{BoyEFB1994}, and matrix norm optimization \cite{RecFP2010,VanB1994}.  On the other hand, applications of GPs include the computation of information-theoretic quantities \cite{ChiB2004}, digital circuit gate sizing \cite{BoyKPH2005}, chemical process control \cite{WalGW1986}, matrix scaling and approximate permanent computation \cite{LinSW2000}, entropy maximization problems in statistical learning \cite{DinKW1977}, and power control in communication systems \cite{Chi2005}.  In this paper we describe a new class of tractable convex optimization problems called \emph{conic geometric programs} (CGPs), which conceptually unify GPs and conic programs such as SDPs, thus facilitating a broad range of new applications.

\paragraph{Motivation} Although SDPs and GPs provide effective solutions for many applications, natural generalizations of these applications that arise in practice fall outside the scope of GPs and SDPs.

\begin{enumerate}
\item{\bf Permanent maximization} Given a collection of matrices, find the one with the largest permanent. Computing the permanent of a matrix is a well-studied problem that is thought to be computationally intractable.  Therefore, we will only seek approximate solutions to the problem of permanent maximization.

\item{\bf Robust optimization} The solution of a GP is sensitive to the input parameters of the problem.  Compute GPs within a robust optimization framework so that the solutions offer robustness to variability in the problem parameters.

\item{\bf Hitting times in dynamical systems} Given a linear dynamical system consisting of multiple modes and a region of feasible starting points, compute the smallest time required to hit a specified target set from an arbitrary feasible starting point.

\item{\bf Quantum channel capacity} Compute the capacity of a noisy communication channel over which quantum states can be transmitted.  The capacity of a channel is the maximum rate at which information can be communicated per use of the channel \cite{Hol1997,SchW1998}.
\end{enumerate}

A detailed mathematical description of each of these applications is given in Section~\ref{sec:applications}.  The pursuit of a unified and tractable solution framework for these questions motivates our development of CGPs in this paper.  Some of these applications and close variants have been studied previously in the literature; we discuss the relevant prior work and the more powerful generalizations afforded by CGPs in Section~\ref{sec:applications}.  We also describe in Section~\ref{sec:applications} other problems in domains such as quantum state tomography, statistical mechanics (computation of equilibrium densities), and kernel learning in data analysis for which previously proposed methods can be viewed as special instances of CGPs.

A GP is a convex optimization problem in which a linear function of a decision variable $\bx \in \R^n$ is minimized subject to affine constraints on $\bx$ and upper-bound constraints on \emph{positive sums of exponentials} of affine functions of $\bx$.  Formally, these latter constraints are specified as follows\footnote{GPs are sometimes described in the literature as optimization problems involving so-called \emph{posynomial} functions, which are in general non-convex; details of the equivalence between the posynomial description of GPs and their specification in convex form can be found in \cite{BoyKVH2007,DufPZ1967}.}:
\begin{equation*}
\sum_{i=1}^k \bc_i \exp([\bQ' \bx]_i) + d \leq 0.
\end{equation*}
Here $\bQ \in \R^{n \times k}$, $\bc \in \R^k$ is a vector with nonnegative entries ($\bc_i$ is the $i$'th entry of $\bc$), and $d \in \R$; all these quantities are fixed problem parameters.  Note that if there are no constraints on sums-of-exponentials, then a GP reduces to an LP.  Therefore, GPs include LPs as a special case.  An SDP is a prominent instance of a \emph{conic} optimization problem that generalizes LPs in a different manner in comparison to GPs.  In a conic program, the goal is to minimize a linear function of a decision variable subject to affine constraints as well as constraints that require the decision variable to belong to a convex cone.  Examples of conic optimization problems include LPs, second-order cone programs or SOCPs, and SDPs. We note here that GPs can also be viewed as conic programs for an appropriately chosen cone \cite{Gli2000}, although they are not usually described in this fashion. In an SDP the decision variable is a symmetric matrix and the conic constraint requires the matrix to be positive semidefinite.  If the matrix is further constrained to be diagonal, then an SDP reduces to an LP.  Thus, both SDPs and GPs are generalizations of LPs, as demonstrated in Figure~\ref{fig:lpsdpcgp}.  However, neither SDPs nor GPs contain the other as a special case, and we expand upon this point next.


\begin{figure}
\centering
\includegraphics[scale=0.35]{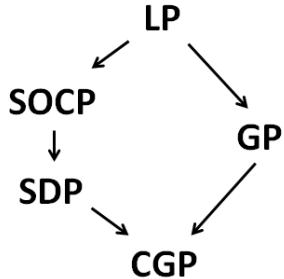} \caption{A graphical description of the various families of structured convex optimization problems ordered by inclusion (the arrows denote inclusion).}
\label{fig:lpsdpcgp}
\end{figure}

While GPs and SDPs have been employed successfully across many application domains, they also have certain restrictions that limit their applicability in solving our motivating problems.  For example, GPs do not in general allow for matrix decision variables on which spectral constraints can be imposed, e.g., a positive semidefiniteness condition on a symmetric matrix.  On the other hand, SDPs face the basic restriction that the constraint sets must have algebraic boundaries, i.e., these are specified by the vanishing of systems of polynomial equations.  Consequently, transcendental quantities such as the entropy function -- which are naturally expressed via GPs -- are inexpressible using SDPs.  These restrictions of GPs and SDPs limit their application in domains such as quantum information in which decision variables are usually specified as positive semidefinite matrices, but the objective functions are variants of the entropy function.




\paragraph{Conic geometric programs} These limitations inform our development in Section~\ref{sec:defn} of CGPs, which are obtained by blending the salient features of GPs and of conic programs such as SDPs.  A CGP is an optimization problem in which a linear function of a decision variable $\bx \in \R^n$ is minimized subject to affine constraints and two additional types of constraints.  First, we allow conic constraints that require $\bx$ to belong to a convex cone.  This addresses the restriction with GPs as the conic constraint can, for example, require a matrix decision variable to be positive semidefinite.  In order to address the second restriction, namely the inexpressibility of transcendental quantities using SDPs, we consider convex constraints in $\bx$ that combine an exponential function and an affine function as follows:
\begin{equation}
\exp(\bq' \bx  - s) + \br' \bx - t \leq 0. \label{eq:expaff}
\end{equation}
Here $\bq, \br \in \R^n$ and $s,t \in \R$ are fixed problem parameters.  Note that this constraint is more general than the pure sum-of-exponentials constraint allowed in GPs, as \eqref{eq:expaff} can be used to specify upper-bound constraints on combined sums of exponential and affine functions.  Duffin et al \cite{DufPZ1967} mention this generalization of GPs in their initial work (see also \cite{BoyKVH2007}), but such constraints are usually not considered as part of the standard form of a GP.  While constraints of the type \eqref{eq:expaff} may appear to be a modest extension of GPs, we will show in Section~\ref{sec:applications} that such combined exponential/affine constraints along with conic constraints significantly expand the expressive power of CGPs relative to GPs and SDPs, and consequently broaden the range of applications.  In particular, constraints of the form \eqref{eq:expaff} enable effective \emph{lift-and-project} representations in which complicated convex sets in $\R^n$ are specified as the projection of efficiently described sets in $\R^{n'}$ with $n' > n$ (see Section~\ref{sec:cgppower}).  Such lift-and-project techniques employed in the CGP framework are very useful in obtaining solutions to GPs that are robust to parameter uncertainty and in approximately maximizing the permanent over a family of matrices.  In summary, as described in Figure~\ref{fig:gp-sdp-cgp}, a CGP is an optimization problem in which a linear function is minimized subject to three types of constraints: combined exponential/affine constraints \eqref{eq:expaff}, affine constraints, and convex conic constraints.  Thus, CGPs are convex optimization problems, and they contain GPs, SOCPs, and SDPs as special cases as shown in Figure~\ref{fig:lpsdpcgp}.


The convexity of CGPs leads to favorable analytical properties, and these are especially apparent via convex duality.  As described in Section~\ref{sec:defn}, the dual of a CGP has a particularly appealing form: it is a convex optimization problem in which the negative of the \emph{relative entropy} between two nonnegative vectors is jointly maximized subject to affine and conic constraints on the vectors.  Recall that the relative entropy between two nonnegative vectors is a jointly convex function.  The conic constraint is expressed in terms of the dual of the cone that specifies the constraint in the primal CGP.  The additional expressive capability of CGPs in contrast to GPs and SDPs is also apparent via the respective dual formulations.  Figure~\ref{fig:gp-sdp-cgp} provides a concise summary -- see Section~\ref{sec:defn} for more details.


In addition to containing GPs and SDPs as special instances, CGPs offer significantly more expressive power than either of these families of convex programs considered separately.  For example, GPs can be used to solve entropy maximization problems involving probability vectors, while CGPs can be used to solve entropy maximization problems involving quantum density matrices.  Another example is a constraint in a decision variable $\bx \in \R^n$ of the form:
\begin{equation*}
\sum_{i=1}^k \bC^{(i)} \exp([\bQ' \bx]_i) \preceq \mathbf{I},
\end{equation*}
where the $\bC^{(i)}$ are positive semidefinite matrices, $\mathbf{I}$ is an identity matrix of appropriate size, $\bQ \in \R^{n \times k}$, and $\preceq$ denotes the matrix semidefinite ordering.  Such constraints allow us to minimize the largest eigenvalue of matrix expressions of the form $\sum_{i=1}^k \bC^{(i)} \exp([\bQ' \bx]_i)$ subject to additional convex constraints on $\bx$.  These types of expressions cannot be handled directly via GPs or SDPs alone; see Section~\ref{sec:cgppower} for more examples of convex constraints that can be expressed via CGPs.

\begin{figure*}
\centering
\small
\begin{tabular}{|c|c|c|c|c|}
\hline
& & {\bf Conic} & {\multirow{2}{*}{\bf GPs}} & {\multirow{2}{*}{\bf CGPs}} \\
& & {\bf programs} & & \\
\hline \hline
\multirow{3}{*}{{\bf Primal}}
& \emph{objective} & linear & linear & linear \\
\cline{2-5}
& {\multirow{2}{*}{\emph{constraints}}} & affine, & affine, sums of & affine, conic, \\
& & conic & exponentials & combined exponential/affine \\
\hline
\multirow{5}{*}{{\bf Dual}}
& {\multirow{3}{*}{\emph{objective}}} & {\multirow{3}{*}{linear}} & normalized & (negative) joint \\
& & & entropy & relative entropy \\
& & & + linear & + linear \\
\cline{2-5}
& {\multirow{2}{*}{\emph{constraints}}} & affine, & affine, & affine, \\
& & conic & nonnegativity & conic \\
\cline{2-5}
\hline
\end{tabular}
\caption{A summary of the types of objective functions and constraints in the primal and dual formulations of conic programs, GPs, and CGPs.  Note that the dual formulation consists of the maximization of a concave objective function subject to convex constraints.}\label{fig:gp-sdp-cgp}
\end{figure*}

\paragraph{Solutions of motivating problems} The enhanced expressive power of CGPs enables us to solve our motivating problems described earlier in the introduction.  CGPs can be used to approximately maximize the permanent over a family of (entrywise) nonnegative matrices that is specified via affine and conic constraints, e.g., a set that is defined as a section of the cone of positive semidefinite matrices.  We provide bounds on the quality of the approximation based on Van der Waerden's conjecture, which was originally settled in the 1980s \cite{Ego1981,Fal1981} and for which Gurvits gave an elegant proof more recently \cite{Gur2008}.  CGPs also provide solutions to GPs that are robust to parameter uncertainty when the uncertainty sets are specified via affine and conic constraints.  In this paper, we consider worst-case deterministic variability in the parameters. As our third example, we demonstrate that CGPs can compute the hitting times in linear dynamical systems consisting of multiple modes in which the initial point may belong to a set specified by affine and conic constraints, and in which the value of each mode is not known precisely (each mode may take on any value in some known range).  Finally, CGPs can be used to exactly or approximately compute certain capacities associated with quantum channels; as an example, in Section~\ref{subsec:quantumcapacity} we provide a comparison between a ``classical-to-quantum'' capacity of a quantum channel, and the capacity of a classical channel induced by the quantum channel (i.e., one is restricted to send and receive only classical information).  These and other applications of CGPs are detailed in Section~\ref{sec:applications}.


\paragraph{Paper outline} In Section~\ref{sec:defn}, we define a CGP, derive the corresponding dual problem of relative entropy minimization, and discuss the complexity of solving a CGP using an interior-point method.  We give examples of functions and constraints that can be expressed via CGPs in Section~\ref{sec:cgppower}.  We describe solutions to our motivating problems based on CGP in Section~\ref{sec:applications}.  We conclude with a brief summary and discussion of further research directions in Section~\ref{sec:discussion}.

\paragraph{Notational convention} The nonnegative orthant in $\R^k$ of vectors consisting of entrywise nonnegative entries is denoted by $\R^k_+$ and the positive orthant with strictly positive elements by $\R^k_{++}$.  The space of $n \times n$ symmetric matrices is denoted by $\Sym(n)$, which is a subspace of $\R^{n \times n}$, in particular, $\Sym(n) \cong \R^{n+1 \choose 2}$. We refer throughout the paper to (closed) \emph{convex cones}, which are sets that are closed under nonnegative linear combinations of the elements.  For a vector $\bx \in \R^n$ we denote the elementwise exponential by $\exp\{\bx\} \in \R^n$ with curly braces so that $[\exp\{\bx\}]_i = \exp(\bx_i)$.  Similarly the elementwise logarithm for vectors $\bx \in \R^n_{++}$ is denoted by $\log\{\bx\} \in \R^n$.  For vectors $\bx, \by \in \R^n$ we denote elementwise ordering by $\bx \leq \by$ to specify that $\bx_i \leq \by_i$ for $i = 1,\dots,n$.  We denote positive semidefinite ordering by $\preceq$ so that $\bM \preceq \bN$ implies $\bN - \bM$ is a positive semidefinite matrix for symmetric matrices $\bM, \bN \in \Sym(n)$.  The \emph{dual} of a convex cone $\K \subseteq \R^n$ is denoted $\K^{\star}$ and it is the set of elements in $\R^n$ that form a nonnegative inner-product with every element of $\K$:
\begin{equation}
\K^{\star} = \{ \bnu ~ | ~ \bx' \bnu \geq 0, ~ \forall \bx \in \K\}. \label{eq:conedual}
\end{equation}
Finally, for any real-valued function $f$ with domain $\mathcal{D} \subseteq \R^n$, the \emph{Fenchel} or \emph{convex conjugate} $f^\star$ is given by:
\begin{equation}
f^\star(\bnu) = \sup\{\bx' \bnu - f(\bx) ~ |~ \bx \in \mathcal{D}\} \label{eq:conjugate}
\end{equation}
for $\bnu \in \R^n$.  For more details of these concepts from convex analysis, we refer the reader to \cite{Roc1970}.


\section{Definition, Duality, and Numerical Aspects} \label{sec:defn}

In this section, we give a formal definition of CGPs and derive the associated dual problems.  We also describe the key elements of an interior-point method for solving CGPs.

\subsection{What is a CGP?} \label{subsec:cgp}
CGPs are optimization problems specified in terms of conic, affine, and combined exponential/affine constraints:
\begin{definition}
A \emph{CGP} is a convex optimization problem in a decision variable $\bx \in \R^n$ as follows:
\begin{equation}
\begin{aligned}
\inf_{\bx \in \R^n} & ~~~~~~~~~~~~~~~~~~~ \bp' \bx& \\ \mbox{s.t.} & ~~~ \exp\{\bQ' \bx - \bu\} + \bR' \bx - \bv \leq \bzero& \\ & ~~~~~~~~~~~~~~~~ \bS' \bx = \bw & \\ & ~~~~~~~~~~~~~~~~~~ \bx \in \K &
\end{aligned}\tag{P}\label{eq:cgpprimal}
\end{equation}
Here $\bQ, \bR \in \R^{n \times k}, \bS \in \R^{n \times m}$, and $\bu, \bv \in \R^k, \bw \in \R^m, \bp \in \R^n$ are fixed parameters.  Further, $\K$ is a convex cone.
\end{definition}

CGPs can be specialized to obtain conic optimization problems.  Specifically, by considering CGPs without the combined exponential/affine constraint, we obtain optimization problems in which a linear function is minimized subject to affine constraints and a conic constraint -- with appropriate choice of this cone, one can obtain conic programs such as LPs ($\K$ is the nonnegative orthant), SOCPs ($\K$ is the second-order or Lorentz cone), and SDPs ($\K$ is the cone of positive semidefinite matrices).  Similarly, CGPs can be specialized to obtain GPs as special cases.  In particular, we may neglect the conic constraint (i.e., $\K = \R^n$), and we need to be able to express constraints of the form:
\begin{equation*}
\sum_{i=1}^k \bc_i \exp([\bQ' \bx]_i) + d \leq 0
\end{equation*}
for $\bc \in \R^k_{++}$, $\bQ \in \R^{n \times k}$, and $d \in \R$.  Letting $\bu = \log\{\bc\} \in \R^k$ be the vector of the elementwise logarithms of the coefficients, this constraint is equivalent to the following system of constraints in decision varibles $\bx, \by \in \R^n$ and fixed parameters $\bQ \in \R^{n \times k}, \bu \in \R^k, d \in \R$:
\begin{equation*}
\exp\{\bQ' \bx + \bu\} - \by \leq \bzero; ~~~ \ones' \by + d \leq 0,
\end{equation*}
each of which can be expressed within the CGP framework \eqref{eq:cgpprimal}.  Here $\ones \in \R^n$ refers to the all-ones vector.  In this manner, CGPs provide a common generalization of LPs, SOCPs, SDPs, and GPs.

\subsection{Dual of a CGP} \label{subsec:dualcgp}
The dual of a CGP is specified in terms of the \emph{relative entropy} function of two vectors $\bnu, \blambda \in \R^k_{+}$:
\begin{equation}
D(\bnu, \blambda) = \sum_{i=1}^k \bnu_i \log\left(\frac{\bnu_i}{\blambda_i}\right). \label{eq:relent}
\end{equation}
We adopt the convention that $D(\mathbf{0},\mathbf{0}) = 0$.  The relative entropy function is \emph{jointly convex} in both the arguments.  In order to transparently obtain the convex dual of a CGP, we first establish a relationship between the relative entropy function and the combined exponential/affine function via convex conjugacy.  This result is well-known but we include it here for completeness.
\begin{lemma} \label{theo:explinconj}
Let $\chi_{\el}(\bx,\by): \R^n \times \R^n \rightarrow \R$ denote the characteristic function of the exponential/affine as follows:
\begin{equation*}
\chi_{\el}(\bx,\by) = \begin{cases} 0, & \mathrm{if} ~ \exp\{\bx\}+\by \leq \bzero \\ \infty, & \mathrm{otherwise} \end{cases}
\end{equation*}
Then the convex conjugate $\chi_{\el}^\star(\bnu,\blambda) = D(\bnu,e\blambda)$ with domain $(\bnu, \blambda) \in \R^n_+ \times \R^n_+$, where $e$ is the Euler constant.
\end{lemma}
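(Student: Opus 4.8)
The plan is to compute the supremum defining $\chi_{\el}^\star(\bnu,\blambda)$ directly from \eqref{eq:conjugate}. By definition,
\[
\chi_{\el}^\star(\bnu,\blambda) = \sup\left\{ \bx'\bnu + \by'\blambda ~\middle|~ \exp\{\bx\} + \by \leq \bzero \right\}.
\]
First I would observe that the constraint $\exp\{\bx\}+\by \leq \bzero$ forces $\by \leq -\exp\{\bx\} < \bzero$, so the feasible set is nonempty only over all $\bx \in \R^n$ with $\by$ ranging over $(-\infty, -\exp\{\bx\}]$ coordinatewise. Since the objective $\bx'\bnu + \by'\blambda$ is increasing in each $\by_i$ precisely when $\blambda_i \geq 0$, the key first step is a case analysis on the sign pattern of $\blambda$ (and then $\bnu$): if some $\blambda_i < 0$ the supremum over $\by_i \to -\infty$ is $+\infty$, and if some $\blambda_i = 0$ while $\bnu_i < 0$ then sending $\bx_i \to -\infty$ (with $\by_i$ following) gives $+\infty$; this pins down the effective domain to $\R^n_+ \times \R^n_+$ as claimed. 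On that domain, at the optimum we may take $\by = -\exp\{\bx\}$ exactly (pushing each $\by_i$ up to its upper bound, which is optimal since $\blambda_i \geq 0$), reducing the problem to the unconstrained maximization
\[
\chi_{\el}^\star(\bnu,\blambda) = \sup_{\bx \in \R^n} \left\{ \bx'\bnu - \sum_{i=1}^k \blambda_i \exp(\bx_i) \right\}.
\]

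The second step is to solve this separable concave maximization coordinatewise. For each $i$, I would maximize $\bx_i \bnu_i - \blambda_i \exp(\bx_i)$ over $\bx_i \in \R$. When $\blambda_i > 0$ and $\bnu_i > 0$, setting the derivative to zero gives $\exp(\bx_i) = \bnu_i/\blambda_i$, i.e.\ $\bx_i = \log(\bnu_i/\blambda_i)$, with optimal value $\bnu_i \log(\bnu_i/\blambda_i) - \bnu_i$. When $\bnu_i = 0$ the optimal value is $0$ (attained as $\bx_i \to -\infty$), which is consistent with the convention $D(\mathbf{0},\mathbf{0})=0$ and with the relative entropy summand vanishing. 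When $\blambda_i = 0$ but $\bnu_i = 0$, the summand is again $0$. Summing over $i$ yields
\[
\chi_{\el}^\star(\bnu,\blambda) = \sum_{i=1}^k \left( \bnu_i \log\frac{\bnu_i}{\blambda_i} - \bnu_i \right) = \sum_{i=1}^k \bnu_i \log\frac{\bnu_i}{e\,\blambda_i} = D(\bnu, e\blambda),
\]
using $-\bnu_i = \bnu_i \log(1/e)$ and folding the $e$ into the denominator. This identifies the conjugate with $D(\bnu, e\blambda)$ on $\R^n_+ \times \R^n_+$.

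The main obstacle — though a minor one — is handling the boundary/degenerate cases carefully: the convention $D(\mathbf{0},\mathbf{0})=0$, the summands where $\bnu_i = 0$ (where one should verify the supremum is a true infimum-of-increasing-limits equal to $0$ rather than being attained), and confirming that when $\bnu_i > 0$ but $\blambda_i = 0$ the supremum is $+\infty$ (take $\bx_i \to +\infty$), which is exactly the value $D(\bnu, e\blambda)$ assigns via the $\bnu_i \log(\bnu_i/0) = +\infty$ convention. I would also double-check the domain claim by noting that outside $\R^n_+ \times \R^n_+$ the conjugate is $+\infty$, so stating "domain $\R^n_+ \times \R^n_+$" is the assertion that this is precisely where $\chi_{\el}^\star$ is finite. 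Everything else is the routine one-variable calculus above, and since $\chi_{\el}$ is a closed convex function, biconjugation guarantees no loss of information, though we do not need that here.
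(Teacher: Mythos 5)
Your proposal is correct and follows essentially the same route as the paper's proof: eliminate $\by$ by pushing it to the boundary $\by = -\exp\{\bx\}$, then solve the resulting separable unconstrained maximization over $\bx$ via first-order conditions to get $\hat{\bx}_i = \log(\bnu_i/\blambda_i)$ and the value $D(\bnu, e\blambda)$. The only difference is that you treat the degenerate sign cases ($\blambda_i < 0$, $\blambda_i = 0$, $\bnu_i = 0$) explicitly, which the paper's terse argument glosses over; this is a welcome bit of extra care rather than a different approach.
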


\begin{proof}
From the definition \eqref{eq:conjugate} of the convex conjugate of a function we have that:
\begin{eqnarray}
\chi_{\el}^\star(\bnu,\blambda) &=& \sup_{\bx, \by} ~ \bx' \bnu + \by' \blambda ~ \mathrm{s.t.} ~ \exp\{\bx\} + \by \leq \bzero \nonumber \\ &=& \sup_{\bx} ~ \bx' \bnu - \exp\{\bx\}' \blambda. \label{eq:relentconj1}
\end{eqnarray}
Taking derivatives with respect to each $\bx_i$ and setting the result equal to $0$, we have that the optimal $\hat{\bx}$ must satisfy:
\begin{equation*}
\bnu_i - \exp(\hat{\bx}_i) \blambda_i = 0
\end{equation*}
for each $i = 1,\dots,n$.  Plugging in the optimal value $\hat{\bx}_i = \log\left(\tfrac{\bnu_i}{\blambda_i}\right)$ into \eqref{eq:relentconj1}, we have that
\begin{eqnarray*}
\chi_{\el}^\star(\bnu,\blambda) &=& \sum_i \bnu_i \log\left(\tfrac{\bnu_i}{\blambda_i}\right) - \bnu_i \\ &=& D(\bnu,e\blambda),
\end{eqnarray*}
which is the claimed result.
\end{proof}

We are now in a position to state the main result concerning the duals of CGPs.  As CGPs are specified in terms of exponential/affine functions, it is natural to expect the relative entropy function to play a prominent role in the dual based on the conjugacy result of Lemma~\ref{theo:explinconj}.
\begin{proposition} \label{theo:cgpdual}
The dual of the CGP \eqref{eq:cgpprimal} is the following convex optimization problem:
\begin{equation}
\begin{aligned}
\sup_{\bnu , \blambda \in \R^k_{+}, ~ \bdelta \in \R^m} & ~~~ -[D(\bnu, e \blambda) + \bu' \bnu + \bv' \blambda + \bw' \bdelta] & \\ \mbox{s.t.} & ~~~~~~~~ \bp + \bQ \bnu + \bR \blambda + \bS \bdelta \in \K^{\star} &
\end{aligned}\tag{D}\label{eq:dualcgp}
\end{equation}
Here $e$ is the Euler constant, $\K^{\star} \subseteq \R^n$ is the dual of the cone $\K$ \eqref{eq:conedual}, and $\bQ, \bR, \bS$ are the matrices specified in the primal form of a CGP \eqref{eq:cgpprimal}.
\end{proposition}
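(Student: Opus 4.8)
The plan is to obtain \eqref{eq:dualcgp} as the Lagrangian dual of \eqref{eq:cgpprimal}, using Lemma~\ref{theo:explinconj} to identify the relative-entropy term. First I would rewrite the primal in a form suited to conjugate duality by introducing auxiliary variables $\ba, \bb \in \R^k$: the combined exponential/affine constraint $\exp\{\bQ' \bx - \bu\} + \bR'\bx - \bv \leq \bzero$ is equivalent to the system $\exp\{\ba\} + \bb \leq \bzero$ together with the identities $\ba = \bQ'\bx - \bu$ and $\bb = \bR'\bx - \bv$. Thus \eqref{eq:cgpprimal} becomes the minimization of $\bp'\bx + \chi_{\el}(\ba,\bb)$ over $\bx \in \K$ and $\ba,\bb \in \R^k$ subject to those two identities and $\bS'\bx = \bw$, where $\chi_{\el}$ is the characteristic function of Lemma~\ref{theo:explinconj} (with $n$ replaced by $k$).

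Next I would dualize only the affine/identity constraints, keeping the conic constraint $\bx \in \K$ inside the inner minimization. Assigning multipliers $\bnu, \blambda \in \R^k$ to $\bQ'\bx - \bu - \ba = \bzero$ and $\bR'\bx - \bv - \bb = \bzero$, and $\bdelta \in \R^m$ to $\bS'\bx - \bw = \bzero$, the Lagrangian splits into three groups: a part $(\bp + \bQ\bnu + \bR\blambda + \bS\bdelta)'\bx$ to be minimized over $\bx \in \K$; a part $\chi_{\el}(\ba,\bb) - \bnu'\ba - \blambda'\bb$ to be minimized over $\ba, \bb$; and the constant $-(\bu'\bnu + \bv'\blambda + \bw'\bdelta)$. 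By the definition \eqref{eq:conedual} of $\K^{\star}$, the infimum of a linear functional over $\K$ equals $0$ when the functional lies in $\K^{\star}$ and $-\infty$ otherwise, so the first group contributes exactly the feasibility condition $\bp + \bQ\bnu + \bR\blambda + \bS\bdelta \in \K^{\star}$. The second group is, by definition, $-\chi_{\el}^{\star}(\bnu,\blambda)$, which Lemma~\ref{theo:explinconj} evaluates to $-D(\bnu, e\blambda)$ on the domain $\bnu, \blambda \in \R^k_+$ (and $-\infty$ outside it). Collecting the three contributions yields precisely the dual objective and constraint of \eqref{eq:dualcgp}, and weak duality ($\inf\eqref{eq:cgpprimal} \geq \sup\eqref{eq:dualcgp}$) is immediate from the construction.

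The remaining work is routine bookkeeping; the points that need care are (i) performing the dualization \emph{partially}, leaving $\bx \in \K$ in the inner problem so that $\K^{\star}$ rather than all of $\R^n$ appears in the dual, and (ii) tracking signs so that $\bQ\bnu$, $\bR\blambda$, $\bS\bdelta$ enter \eqref{eq:dualcgp} with the stated signs — this is where the orientation chosen for each equality and the identity $\bnu'\bQ'\bx = (\bQ\bnu)'\bx$ matter. The one genuinely extra ingredient, if one also wants a zero duality gap and attainment in \eqref{eq:dualcgp} rather than mere weak duality, is a constraint qualification: I would invoke a Slater-type condition (existence of an $\bx$ in the relative interior of $\K$ strictly satisfying the exponential/affine and affine constraints) and cite the standard strong-duality theorem for convex programs, e.g. \cite{Roc1970}. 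This constraint-qualification step, rather than the Lagrangian computation itself, is the only place where any real obstacle arises.
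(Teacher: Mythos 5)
Your proposal is correct and follows essentially the same route as the paper: introduce auxiliary variables for the exponential/affine block, form the Lagrangian, and use Lemma~\ref{theo:explinconj} to recognize the inner minimization as $-D(\bnu, e\blambda)$. The only differences are cosmetic — you couple the auxiliary variables by equalities (recovering $\bnu,\blambda \geq 0$ from the domain of $\chi_{\el}^{\star}$) and keep $\bx \in \K$ in the inner problem, whereas the paper uses inequalities with nonnegative multipliers and an explicit multiplier $\bmu \in \K^{\star}$ that it then eliminates; both yield \eqref{eq:dualcgp} identically.
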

\begin{proof}
We consider a CGP in the following form with additional variables $\by, \bz \in \R^k$ for the exponential/affine constraints in order to simplify the derivation of the dual:
\begin{eqnarray*}
\inf_{\bx \in \R^n, \by, \bz \in \R^k} & \bp' \bx + \chi_{\el}(\by,\bz) & \\ \mbox{s.t.} & \bQ' \bx - \bu \leq \by & \\ & \bR' \bx - \bv \leq \bz  & \\ & \bS' \bx = \bw & \\ & \bx \in \K &
\end{eqnarray*}
Introducing dual variables $\bnu, \blambda \in \R^k_+, \bdelta \in \R^m,\bmu \in \K^{\star} \subseteq \R^n$, we have the following Lagrangian:
\begin{eqnarray*}
L(\bx,\by,\bz,\bnu,\blambda,\bdelta,\bmu) &=& \bx' \bp + \chi_{\el}(\by,\bz) + [\bx' \bQ - \bu' - \by'] \bnu \\ && + [\bx' \bR - \bv' - \bz'] \blambda + [\bx' \bS - \bw'] \bdelta - \bx' \bmu \\ &=& \chi_{\el}(\by,\bz) - \by' \bnu - \bz' \blambda \\ && + \bx' [\bp + \bQ \bnu + \bR \blambda + \bS \bdelta - \bmu] - \bu' \bnu - \bv' \blambda - \bw' \bdelta,
\end{eqnarray*}
where the second expression is obtained by a rearrangement of terms.  Minimizing the Lagrangian over the primal variables $\by,\bz$, we have the following simplified expression in terms of the joint relative entropy function by appealing to Lemma~\ref{theo:explinconj}:
\begin{equation}
\inf_{\by,\bz} L(\bx,\by,\bz,\bnu,\blambda,\bmu) = -D(\bnu,e\blambda) + \bx' [\bp + \bQ \bnu + \bR \blambda + \bS \bdelta - \bmu] - \bu' \bnu - \bv' \blambda - \bw' \bdelta. \label{eq:marglagrangian}
\end{equation}
Next we minimize over the primal variable $\bx$ and obtain the following condition that must be satisfied by the dual variables:
\begin{equation*}
\bp + \bR \blambda + \bQ \bnu + \bS \bdelta -\bmu = 0.
\end{equation*}
By eliminating the dual variable $\bmu$, we are left with the following constraint:
\begin{equation*}
\bp + \bR \blambda + \bQ \bnu + \bS \bdelta \in \K^{\star}.
\end{equation*}
From \eqref{eq:marglagrangian} and by incorporating this constraint, we obtain the dual optimization problem \eqref{eq:dualcgp}.
\end{proof}


We emphasize that the dual of a CGP is a \emph{joint} relative entropy optimization problem in which the negative of the relative entropy between two vectors is jointly maximized subject to linear and conic constraints on the vectors.  The dual of a standard GP is a \emph{normalized entropy} maximization problem subject to linear and nonnegativity constraints (see \cite{BoyKVH2007,Chi2005} for an extensive survey of GPs).  Specifically, the normalized entropy of a nonnegative vector $\bnu \in \R^k_+$ is a concave function defined as follows \cite{BoyV2004}:
\begin{equation*}
h_{\mathrm{norm}}(\bnu) = -\sum_{i=1}^k \bnu_i \log\left(\frac{\bnu_i}{\ones' \bnu}\right)
\end{equation*}
This function can be viewed as a restriction of the negative of the joint relative entropy function \eqref{eq:relent} in which each entry of the second argument $\blambda$ is identically equal to $\blambda_i = \ones' \bnu$. Therefore, the additional expressive power provided by CGPs in contrast to standard GPs can be understood from the dual viewpoint as the result of two generalizations -- the first is the negative relative entropy function in the objective instead of the normalized entropy, and the second is the linear and general conic constraints rather than just linear and nonnegativity constraints.  The dual of a CGP can also be viewed as a generalization of the dual of a conic program: although the constraints in both cases are of the same type, the dual of a conic program consists of a linear objective while the dual of a CGP consists of a joint relative entropy term as well as linear terms.  We refer the reader to the table in Figure~\ref{fig:gp-sdp-cgp} for a concise summary.

\subsection{Numerical Solution of CGPs} \label{subsec:cgpsolver}

Interior-point techniques \cite{NesN1994} provide a general methodology via \emph{self-concordant barrier functions} to obtain polynomial-time algorithms for general families of convex programs such as LPs, SOCPs, GPs, and SDPs.  We refer the reader to many excellent texts for details \cite{BenN2001,NesN1994,NocW1999,Ren1997}.  Here we highlight the types of barrier functions that may be employed in interior-point methods for CGPs, and we give runtime bounds for these algorithms.

For conic constraints, natural self-concordant barriers for cones such as the orthant, the second-order cone, and the semidefinite cone are specified by the associated \emph{logarithmic barriers} \cite{NesN1994,NesT1998}.  The situation is a bit more subtle for the combined exponential/affine constraints $\exp\{\bQ' \bx - \bu\} + \bR' \bx - \bv \leq \bzero$ in the primal form of a CGP (with $\bQ, \bR \in \R^{n \times k}$ and $\bu,\bv \in \R^k$).  Consider the following restatement of this constraint with additional variables $\by, \bz \in \R^k$:
\begin{equation*}
\by - \log\{-\bz\} \leq \bzero, ~ \bz \leq \bzero, ~ \bQ' \bx - \bu \leq \by, ~ \bR' \bx - \bv \leq \bz.
\end{equation*}
This method of rewriting exponential constraints is also employed in interior-point solvers for GPs \cite{NesN1994}.  The redundancy in these constraints is useful because the following logarithmic barrier function for these constraints is self-concordant \cite{NesN1994}:
\begin{equation*}
\sum_{i=1}^k \Big(\log(-\bz_i) + \log\big(\log(-\bz_i) - \by_i \big) + \log\big(\by_i +\bu_i - [\bQ' \bx]_i \big) + \log\big(\bz_i +\bv_i - [\bR' \bx]_i \big) \Big).
\end{equation*}
One can directly write down the usual logarithmic barrier for the constraint $\exp\{\bQ' \bx - \bu\} + \bR' \bx - \bv \leq \bzero$ without the additional variables $\by,\bz$, but this barrier is not known to be self-concordant.  The runtime analysis of interior-point procedures relies on the self-concordance assumption, and therefore one resorts to the above reformulation \cite{NesN1994}.  An upper bound on the number of Newton steps that must be computed to obtain an $\epsilon$-accurate solution of a CGP is given by \cite{NesN1994}:
\begin{equation*}
\# ~ \mathrm{Newton ~ steps} ~ = ~ \mathcal{O}\left(\sqrt{k + \vartheta_\K} \log\left(\tfrac{k + \vartheta_\K}{\epsilon}\right)\right).
\end{equation*}
Here $\vartheta_\K$ is a ``complexity parameter'' of the barrier associated with the cone $\K$ \cite{NesN1994}.  If $\K = \R^n_+$ is the nonnegative orthant in the primal CGP \eqref{eq:cgpprimal}, then $\vartheta_\K = n$ and we have the following bound:
\begin{equation*}
\# ~ \mathrm{total ~ operations ~ for ~ CGP ~ with ~ orthant ~ constraint} ~ = ~ \mathcal{O}\left((k + n)^{3.5} \log\left(\tfrac{k + n}{\epsilon}\right) \right).
\end{equation*}
Similarly, if $\K \subset \Sym(\ell)$ is the cone of positive semidefinite matrices in \eqref{eq:cgpprimal} (note that $n = {\ell+1 \choose 2}$), then $\vartheta_\K = \ell$ and we have:
\begin{equation*}
\# ~ \mathrm{total ~ operations ~ for ~ CGP ~ with ~ semidefinite ~ constraint} ~ = ~ \mathcal{O}\left((k + \ell)^{3.5} \ell^3 \log\left(\tfrac{k + \ell}{\epsilon}\right) \right).
\end{equation*}
Note that these bounds hold for arbitrary CGPs.  The types of CGPs that arise in practical settings frequently possess additional structure such as sparsity or symmetry, which can be exploited to significantly reduce the overall runtime.

\section{The Expressive Power of CGPs} \label{sec:cgppower}


A central question with any structured family of convex programs is the class of convex sets that can be represented within that family.  For example, all polytopes are representable within the framework of LPs.  The problem of characterizing the set of SDP-representable convex sets is more challenging and it is an active research topic \cite{BlePT2013,GouPT2013,HelN2009}.  As we have seen in the previous section, CGPs can be specialized to obtain traditional GPs as well as conic optimization problems such as SOCPs and SDPs.  In this section, we describe the expressive power of the CGP formulation in representing convex sets and functions that cannot be represented purely via GPs or via SDPs.  The examples discussed below play a prominent role in Section~\ref{sec:applications} where we demonstrate the utility of CGPs in a range of problem domains.  We denote the sum of the top-$r$ eigenvalues of $\bM \in \Sym(n)$ by $s_r(\bM)$.  In this section, we will exploit the fact that the sublevel sets of this function are SDP-representable \cite{BenN2001}:
\begin{eqnarray}
&s_r(\bM) \leq t& \nonumber \\ &\Updownarrow& \nonumber \\ &\exists ~ \bZ \in \Sym(n), ~ \alpha \in \R ~~ \mathrm{s.t.} ~~ \bZ \succeq \bzero, ~ \bM - \bZ \preceq \alpha \mathbf{I}, ~ r \alpha + \mathrm{trace}(\bZ) \leq t.& \label{eq:topreig}
\end{eqnarray}

\subsection{Basics of Lift-and-Project Representations} \label{subsec:liftandproj}


At an elementary level, the collection of CGP-representable sets in $\R^n$ includes those that can be represented as the intersection of sets specified by combined exponential/affine inequalities, affine constraints, and conic constraints, all in a decision variable $\bx \in \R^n$.  However, such direct CGP representations in $\R^n$ can be somewhat limited in their expressive power, and therefore it is of interest to consider \emph{lifted} CGP representations in which a convex set in $\R^n$ is defined as the \emph{projection} of a CGP-representable set in $\R^{n'}$ for $n' > n$.  Such lift-and-project CGP representations are extremely useful if $n'$ is not much larger than $n$ and if the CGP representation in $\R^{n'}$ is tractable to compute.  Here we give a simple example of a lift-and-project representation to illustrate its power.  The remainder of Section~\ref{sec:cgppower} is devoted to more sophisticated examples.

The logarithm function has the following variational representation for $\alpha > 0$, which can be easily verified:
\begin{equation}
\log(\alpha) = \inf_{\beta \in \R} ~ e^{-\beta-1} \alpha + \beta. \label{eq:logliftproj}
\end{equation}
Here $\beta$ plays the role of the additional variable or ``lifting dimension.''  The appearance of $\beta$ in this nonlinear expression with exponential and affine terms is somewhat non-traditional; in comparison, in conic lift-and-project representations the additional variables appear in linear and conic constraints.  This simple observation yields a number of interesting CGP lift-and-project representations.

\paragraph{Logarithm of sum-of-exponentials} Expressions involving a combination of an affine function, a sum-of-exponentials, and a logarithm of a sum-of-exponentials can be upper-bounded by appealing to \eqref{eq:logliftproj}:
\begin{eqnarray*}
&\log\left(\sum_{i=1}^k \bc_i \exp([\bQ' \bx]_i)\right) + \sum_{j=1}^{\bar{k}} \bar{\bc}_j \exp([\bar{\bQ}' \bx]_j) + \br' \bx - t \leq 0& \\ &\Updownarrow& \\ \exists \beta ~~ \mathrm{s.t.} ~ & \sum_{i=1}^k \bc_i \exp([\bQ' \bx]_i - \beta - 1) + \sum_{j=1}^{\bar{k}} \bar{\bc}_j \exp([\bar{\bQ}' \bx]_j) + \br' \bx - t + \beta \leq 0&
\end{eqnarray*}
Here $\bQ \in \R^{n \times k}, \bar{\bQ} \in \R^{n \times \bar{k}}, \bc \in \R^k_{+}, \bar{\bc} \in \R^{\bar{k}}_{+}, \br \in \R^n, t \in \R$ are fixed parameters.  The latter constraint can be specified in the CGP framework \eqref{eq:cgpprimal}, with the additional variable $\beta$ playing the role of the lifting dimension.  We note that upper bounds on a pure sum-of-exponentials function, or equivalently, on a logarithm of a sum-of-exponentials are considered within the framework of GPs \cite{BoyKVH2007,Chi2005}.  However, CGPs can be used to bound composite expressions involving both these types of functions based on the additional flexibility provided by constraints on combined exponential/affine functions.

\paragraph{Product of sum-of-exponentials} The reformulation \eqref{eq:logliftproj} of the logarithm is also useful in providing CGP representations of \emph{products} of expressions, each of which is sum-of-exponentials.  Specifically, consider the following expressions in a decision variable $\bx \in \R^n$ for $j = 1,\dots,p$:
\begin{equation}
f_j(\bx) = \sum_{i=1}^{k_j} \bc_i^{(j)} \exp \left( \left[{\bQ^{(j)}}' \bx\right]_i \right). \label{eq:prodsumexp1}
\end{equation}
As usual, $\bc^{(j)} \in \R^{k_j}_+, \bQ^{(j)} \in \R^{n \times k_j}$ for $j = 1, \dots, p$.  Suppose we wish to represent the constraint:
\begin{equation}
\prod_{j=1}^p f_j(\bx) \leq t. \label{eq:prodsumexp2}
\end{equation}
This expression can equivalently be written as:
\begin{equation*}
\sum_{j=1}^p \log(f_j(\bx)) \leq \log(t).
\end{equation*}
By appealing to \eqref{eq:logliftproj} and \eqref{eq:prodsumexp1}, we have that the constraint \eqref{eq:prodsumexp2} is equivalent to:
\begin{equation}
\exists \bbeta \in \R^p, \delta \in \R ~ \mathrm{s.t.} ~ \sum_{j=1}^p \sum_{i=1}^{k_j} \bc^{(j)}_i \exp \left( \left[{\bQ^{(j)}}' \bx\right]_i -\bbeta_j - 1\right) + \boldsymbol{\ones}' \bbeta \leq \delta ~ \mathrm{and} ~ \exp(\delta) \leq t. \label{eq:prodsumexp3}
\end{equation}
The reason we introduce the extra variable $\delta$ is to allow for potentially additional convex constraints on $t$, as constraints of the form \eqref{eq:prodsumexp2} may be embedded inside larger convex programs in which $t$ may be a decision variable.  Here again $\bbeta \in \R^p$ plays the role of the lifting dimension.  Using these techniques one can also bound composite expressions involving a sum of exponentials, a logarithm of a sum of exponentials, and a product of sums of exponentials.

\subsection{Sums of Exponentials with General Coefficients} \label{subsec:sumexpcgp}

As our next collection of examples, we consider constraints on sums of exponentials in which the coefficients may be more general than positive scalars.

\paragraph{Matrix-weighted sum-of-exponentials} Consider the following constraint in decision variable $\bx \in \R^n$ with $\bQ \in \R^{n \times k}$, $\bC^{(i)} \in \Sym(\ell)$ for $i= 1,\dots,k$, $\bR^{(j)} \in \Sym(\ell)$ for $j = 1,\dots,n$, $\bS \in \Sym(\ell)$, and where each $\bC^{(i)} \succeq \bzero$:
\begin{equation*}
\sum_{i=1}^k \bC^{(i)} \exp([\bQ' \bx]_i) + \sum_{j=1}^n \bR^{(j)} \bx_j + \bS \preceq \bzero.
\end{equation*}
This constraint can equivalently be written in terms of decision variables $\bx \in \R^n$ and $\by \in \R^k$ as:
\begin{equation*}
\sum_{i=1}^k \bC^{(i)} \by_i + \sum_{j=1}^n \bR^{(j)} \bx_j + \bS \preceq \bzero ; ~~~ \exp\{\bQ' \bx\} - \by \leq \bzero.
\end{equation*}
These latter constraints can be expressed within the framework of CGP.  The first constraint here is a \emph{linear matrix inequality} and an SDP consists purely of such types of constraints.  A CGP allows for additional exponential/affine constraints of the second type, thus broadening the scope of SDPs.  As an instance of this construction, one can impose upper-bound constraints on sums of the top-$r$ eigenvalues of matrix-valued expressions such as $\sum_{i=1}^k \bC^{(i)} \exp([\bQ' \bx]_i)$ with $\bC^{(i)} \succeq \bzero$ by applying \eqref{eq:topreig}.

\paragraph{Coefficients from general cones} Generalizing the previous example, we recall that the cone of positive semidefinite matrices is a \emph{proper} cone \cite{BenN2001} -- these are cones in Euclidean space that are closed, convex, have a non-empty interior, and contain no lines.  Other examples include the nonnegative orthant, the second-order cone, the cone of polynomials that can be expressed as sums-of-squares of polynomials of a given degree, and the cone of everywhere nonnegative polynomials of a given degree.  An important feature of proper cones is that they induce a \emph{partial ordering} analogous to the positive semidefinite ordering.  Specifically, for a proper cone $\K \subset \R^n$, we denote the corresponding partial order by $\preceq_\K$ so that $\bu \preceq_\K \bv$ if and only if $\bv - \bu \in \K$.  As with the previous example, we consider the following convex constraint in a decision variable $\bx \in \R^n$ with $\bQ \in \R^{n \times k}$, $\bC^{(i)} \in \R^\ell$ for $i= 1,\dots,k$, $\bR^{(j)} \in \R^\ell$ for $j = 1,\dots,n$, $\bS \in \R^\ell$, and where each $\bC^{(i)} \in \K$ for a proper cone $\K \subset \R^\ell$:
\begin{equation*}
\sum_{i=1}^k \bC^{(i)} \exp([\bQ' \bx]_i) + \sum_{j=1}^n \bR^{(j)} \bx_j + \bS \preceq_\K \bzero.
\end{equation*}
Again, introducing additional variables $\by \in \R^\ell$ we can rewrite this constraint as follows:
\begin{equation*}
\sum_{i=1}^k \bC^{(i)} \by_i + \sum_{j=1}^n \bR^{(j)} \bx_j +\bS \preceq_\K \bzero; ~~~ \exp\{\bQ' \bx\} \leq \by.
\end{equation*}
If the cone $\K$ is tractable to represent so that checking membership in the cone can be accomplished efficiently, then these constraints can be tractably specified via CGP.

\paragraph{Inverses via Schur complement} Based on the first example with matrix-weighted sums of exponentials, we now show that CGPs can also be used to express certain constraints involving inverses based on the Schur complement.  Consider the following matrix-valued expressions in a variable $\bx \in \R^n$:
\begin{eqnarray*}
F_{11}(\bx) &=& \sum_{i=1}^k \bC^{(i)}_{11} \exp([\bQ_{11}' \bx]_i) + \sum_{j=1}^n \bR^{(j)}_{11} \bx_j + \bS_{11} \\
F_{22}(\bx) &=& \sum_{i=1}^{\tilde{k}} \bC^{(i)}_{22} \exp([\bQ_{22}' \bx]_i) + \sum_{j=1}^n \bR^{(j)}_{22} \bx_j + \bS_{22} \\
F_{12}(\bx) &=& \sum_{j=1}^n \bR^{(j)}_{12} \bx_j + \bS_{12}.
\end{eqnarray*}
As in the first example with matrix coefficients, we have that $\bQ_{11} \in \R^{n \times k}$, $\bC^{(i)}_{11} \in \Sym(\ell)$ for $i= 1,\dots,k$; $\bQ_{22} \in \R^{n \times \tilde{k}}$, $\bC^{(i)}_{22} \in \Sym(\tilde{\ell})$ for $i= 1,\dots,\tilde{k}$; $\bR^{(j)}_{11} \in \Sym(\ell),\bR^{(j)}_{22} \in \Sym(\tilde{\ell}),\bR^{(j)}_{12} \in \R^{\ell \times \tilde{\ell}}$ for $j = 1,\dots,n$; $\bS_{11} \in \Sym(\ell), \bS_{22} \in \Sym(\tilde{\ell}), \bS_{12} \in \R^{\ell \times \tilde{\ell}}$; and where each $\bC^{(i)}_{11} \succeq \bzero$ and each $\bC^{(i)}_{22} \succeq \bzero$.  Then the following set of constraints can be expressed via CGP:
\begin{equation*}
F_{11}(\bx) \prec \bzero ~~~ \mathrm{and} ~~~ F_{22}(\bx) - F_{12}(\bx) F_{11}(\bx)^{-1} F_{12}(\bx)' \prec \bzero.
\end{equation*}
In particular, this set of constraints is equivalent based on the Schur complement to the single constraint:
\begin{equation*}
\left[ \begin{array}{cc}
F_{11}(\bx) & F_{12}(\bx) \\
F_{12}'(\bx) & F_{22}(\bx) \end{array} \right] \prec \bzero
\end{equation*}
This constraint on a symmetric matrix expression of size $(\ell + \tilde{\ell}) \times (\ell + \tilde{\ell})$ can be specified using CGP by following the first example on matrix-weighted sums of exponentials.  The Schur complement method allows us to bound expressions with inverses.  For instance, consider the following set of constraints for $t > 0$:
\begin{equation*}
\sum_{j=1}^k \frac{\left({\br^{(j)}}' \bx - \bs_j \right)^2}{-f_j(\bx)} < t; ~~~ f_j(\bx) < 0,
\end{equation*}
where each $f_j(\bx)$ may be an expression involving a sum of exponentials, affine functions, products of sums of exponentials, and logarithms of sums of exponentials (these latter two types of expressions may be transformed to combined exponential/affine from Section~\ref{subsec:liftandproj}).  Here each $\br^{(j)} \in \R^n$ and $\bs \in \R^k$.  One can express this system of constraints within the CGP framework by applying the Schur complement procedure outlined above.


\subsection{Maximum over Sums of Exponentials} \label{subsec:maxsumexpcgp}

CGPs provide a flexible methodology to deal with constraints on the maximum over a collection of sums of exponentials.  Such constraints arise prominently in robust GP in which one seeks solutions that are robust to uncertainty in the parameters of a GP.  These types of constraints also arise in applications ranging from permanent maximization to hitting-time estimation in dynamical systems -- see Section~\ref{sec:applications} for details.  Formally, consider a maximum over a set of sums of exponentials in a decision variable $\bx \in \R^n$ of the following type:
\begin{eqnarray}
\sup_{\bc \in \cC, \bq^{(i)} \in \cQ^{(i)}} & \sum_{i=1}^k \bc_i \exp \left( {\bq^{(i)}}' \bx \right) \leq t& \label{eq:robustgp} \\ &\Updownarrow& \nonumber \\ \forall \bc \in \cC, \bq^{(i)} \in \cQ^{(i)} & \sum_{i=1}^k \bc_i \exp\left({\bq^{(i)}}' \bx \right) \leq t & \nonumber
\end{eqnarray}
Here $\cC \subset \R^k_+$ and $\cQ^{(i)} \subset \R^n$ for $i = 1, \dots, k$, and in the robust optimization literature these are usually referred to as parameter uncertainty sets.  In principle, these constraints specify convex sets because they can be viewed as the intersection of a (possibly infinite) collection of convex constraints.  When $\cC$ and each $\cQ^{(i)}$ are finite sets, then constraints of the form \eqref{eq:robustgp} reduce to a finite collection of constraints on sums of exponentials.  For the case in which $\cC$ and each $\cQ^{(i)}$ are infinite, the challenge is to obtain a tractable representation for the corresponding constraint set via a small, finite number of constraints.  The literature on robust optimization \cite{BenEN2009,BenN1998} has considered this question in great detail in a number of settings; in Section~\ref{subsec:robustgp} we discuss the previous state-of-the-art results for handling parameter uncertainty in sum-of-exponentials functions in the context of robust GPs, and we contrast these with the significant advance facilitated by CGPs.

In order to obtain tractable representations for constraints of the form \eqref{eq:robustgp}, we focus here on sets $\cC$ and $\cQ^{(i)}$ that are themselves tractable to represent.  If these sets are convex, and in particular they have tractable conic representations, then CGPs provide a natural framework to obtain efficient representations for \eqref{eq:robustgp}.  The next result illuminates this point by appealing to convex duality:

\begin{proposition} \label{theo:robustgp}
Suppose $\cC \subset \R^k$ is a convex set of the form:
\begin{equation*}
\cC = \{\bc ~|~ \bc \geq \bzero, ~ \bg + \bF \bc \in \K \}
\end{equation*}
for a convex cone $\K \subset \R^\ell$ and $\bg \in \R^\ell, \bF \in \R^{\ell \times k}$, and $\cQ^{(i)} \subset \R^n$ for $i = 1,\dots,k$ are each sets of the form:
\begin{equation*}
\cQ^{(i)} = \left\{\bq^{(i)} ~|~ \bh^{(i)} + \bM^{(i)} \bq^{(i)} \in \cF^{(i)} \right\}
\end{equation*}
for convex cones $\cF^{(i)} \subset \R^m$ and $\bh^{(i)} \in \R^m, \bM^{(i)} \in \R^{m \times n}$.  Further, assume that there exists a point in $\cC$ satisfying the conic and nonnegativity constraints strictly, and similarly that there exists a point for each $\cQ^{(i)}$ that satisfies the corresponding conic constraint strictly.  Then we have that $\bx \in \R^n$ satisfies the constraint \eqref{eq:robustgp} if and only if there exist $\bzeta \in \R^\ell, ~ \btheta^{(i)} \in \R^m ~\mathrm{for}~ i = 1,\dots,k$ such that:
\begin{eqnarray*}
&\bg' \bzeta \leq t, ~ \bzeta \in \K^\star,& \\ &{\bM^{(i)}}' \btheta^{(i)} + \bx = \bzero, ~ \exp\left({\bh^{(i)}}' \btheta^{(i)} \right) + [\bF' \bzeta]_i \leq 0, ~ \btheta^{(i)} \in {\cF^{(i)}}^\star, ~\mathrm{for} ~ i = 1,\dots,k&
\end{eqnarray*}
\end{proposition}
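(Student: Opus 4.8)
The plan is to dualize the inner supremum in \eqref{eq:robustgp} over the uncertainty sets $\cC$ and $\cQ^{(i)}$ one at a time, using conic duality (Lagrangian duality for conic programs), and to show that the resulting max-min problem collapses to the stated system of constraints. The key structural observation is that the constraint \eqref{eq:robustgp} asks that a certain optimal value — the value of $\sup_{\bc \in \cC,\, \bq^{(i)} \in \cQ^{(i)}} \sum_i \bc_i \exp({\bq^{(i)}}'\bx)$ — be at most $t$. Since $\bx$ is fixed when we analyze a single constraint, we may treat $\psi_i := \exp({\bq^{(i)}}' \bx)$ and note that $\sum_i \bc_i \psi_i$ is linear in $\bc$ for fixed $\psi$, and each $\psi_i$ is a convex increasing function of the linear quantity ${\bq^{(i)}}' \bx$. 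The supremum therefore separates: first maximize over each $\bq^{(i)} \in \cQ^{(i)}$ to get the largest value of ${\bq^{(i)}}' \bx$ (the exponential being monotone, this is equivalent to maximizing ${\bq^{(i)}}' \bx$ directly), and then maximize the resulting linear form over $\bc \in \cC$.

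First I would handle the inner maximization over $\bq^{(i)}$: for each $i$, consider $\sup\{ {\bq^{(i)}}' \bx \mid \bh^{(i)} + \bM^{(i)} \bq^{(i)} \in \cF^{(i)}\}$. This is a conic linear program in $\bq^{(i)}$; under the stated strict feasibility (Slater) assumption its value equals that of the dual, which after introducing a dual variable $\btheta^{(i)} \in {\cF^{(i)}}^\star$ takes the form $\inf\{ -{\bh^{(i)}}' \btheta^{(i)} \mid {\bM^{(i)}}' \btheta^{(i)} + \bx = \bzero,\ \btheta^{(i)} \in {\cF^{(i)}}^\star\}$ (the equality constraint arising because $\bx$ is unconstrained in sign as a "cost" direction, so the dual feasibility condition matches the primal objective exactly). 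Thus $\sup_{\bq^{(i)} \in \cQ^{(i)}} \exp({\bq^{(i)}}'\bx) = \exp(-{\bh^{(i)}}'\btheta^{(i)})$ at the optimal $\btheta^{(i)}$, and more usefully, the constraint "$\sum_i \bc_i \sup_{\bq^{(i)}} \exp({\bq^{(i)}}'\bx) \le t$ for all $\bc \in \cC$" can be rewritten, after also dualizing over $\bc$, in terms of these $\btheta^{(i)}$. Next I would dualize the outer maximization: $\sup\{ \sum_i \bc_i \phi_i \mid \bc \ge \bzero,\ \bg + \bF\bc \in \K\} \le t$, where $\phi_i = \exp(-{\bh^{(i)}}'\btheta^{(i)})$, is by conic duality (again using Slater for $\cC$) equivalent to the existence of $\bzeta \in \K^\star$ with $\bg'\bzeta \le t$ and $\bF'\bzeta \ge \phi$ componentwise, i.e. $[\bF'\bzeta]_i \ge \exp(-{\bh^{(i)}}'\btheta^{(i)})$, which rearranges to $\exp({\bh^{(i)}}'\btheta^{(i)} \cdot (-1))$... here I must be careful with signs: the cleanest route is to carry the exponential through as $\exp({\bh^{(i)}}'\btheta^{(i)}) + [\bF'\bzeta]_i \le 0$ by absorbing signs into $\btheta^{(i)}$ and $\bzeta$ consistently, matching the statement. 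Finally, assemble: $\bx$ satisfies \eqref{eq:robustgp} iff there exist $\bzeta, \btheta^{(i)}$ with $\bg'\bzeta \le t$, $\bzeta \in \K^\star$, ${\bM^{(i)}}'\btheta^{(i)} + \bx = \bzero$, $\exp({\bh^{(i)}}'\btheta^{(i)}) + [\bF'\bzeta]_i \le 0$, $\btheta^{(i)} \in {\cF^{(i)}}^\star$.

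The main obstacle I anticipate is bookkeeping the signs and the direction of the conic inequalities so that the two successive dualizations compose correctly — in particular ensuring that the monotone exponential can be pulled outside the $\bq^{(i)}$-maximization cleanly, and that strong duality (no duality gap, and attainment) holds at each stage, which is precisely where the two strict-feasibility hypotheses on $\cC$ and the $\cQ^{(i)}$ are invoked. A secondary subtlety is justifying the interchange of the "$\sup$ over $\bc$" with the "$\sup$ over $\bq^{(i)}$": this is legitimate because the objective $\sum_i \bc_i \exp({\bq^{(i)}}'\bx)$ is separately monotone in each $\exp({\bq^{(i)}}'\bx)$ with nonnegative coefficient $\bc_i \ge \bzero$ (as $\bc \in \cC \subset \R^k_+$), so the joint supremum factors as claimed; I would state this as a short lemma before invoking duality. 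Everything else is routine conic Lagrangian duality of the type in \cite{BenN2001,Roc1970}.
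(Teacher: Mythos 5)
Your proposal follows essentially the same route as the paper's proof: decouple the supremum over $\bc$ from the suprema over the $\bq^{(i)}$ (legitimate since $\cC \subset \R^k_+$ and $\exp$ is monotone), then apply conic strong duality to each resulting linear maximization under the Slater hypotheses, and eliminate the intermediate upper bounds. The only blemish is the sign bookkeeping you flag yourself --- with the standard Lagrangian convention the dual of $\sup\{{\bq^{(i)}}'\bx\}$ over $\cQ^{(i)}$ has value ${\bh^{(i)}}'\btheta^{(i)}$ (not its negative) and the dual of the $\bc$-maximization gives $\bF'\bzeta + \by \leq \bzero$ rather than $\bF'\bzeta \geq \by$ --- and these resolve exactly to the constraints in the statement.
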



\noindent \textbf{Note}: The assumption that there exist points that strictly satisfy the constraints specifying $\cC$ and those specifying each $\cQ^{(i)}$ allows us to appeal to strong duality in deriving our result \cite{Roc1970}.

\begin{proof}
The constraint \eqref{eq:robustgp} can equivalently be written as follows:
\begin{eqnarray*}
\exists \by \in \R^k ~ \mathrm{s.t.}~ & \sup_{\bc \in \cC} \by' \bc \leq t & \\ & \sup_{\bq^{(i)} \in \cQ^{(i)}} \exp\left({\bq^{(i)}}' \bx\right) \leq \by_i,& ~ \forall i = 1,\dots,k.
\end{eqnarray*}
This restatement is possible because the set $\cC$ is a subset of the nonnegative orthant $\R^k_+$, and because the uncertainty sets $\cQ^{(i)}$ are decoupled and therefore independent of each other.  The first expression, $\sup_{\bc \in \cC} ~ \by' \bc ~ \leq ~ t$, is a universal quantification for all $\bc \in \cC$. In order to convert this universal quantifier to an existential quantifier, we appeal to convex duality as is commonly done in the theory of robust optimization \cite{BenEN2009,BenN1998}.  Specifically, by noting that $\cC$ has a conic representation, we have that:
\begin{eqnarray}
\sup_{\bc \geq \bzero, \bg + \bF \bc \in \K} ~ &\by' \bc \leq t& \nonumber \\ &\Updownarrow& \nonumber \\  \exists \bzeta \in \R^\ell ~ \mathrm{s.t.} ~ &\bF' \bzeta + \by \leq \bzero, ~ \bzeta \in \K^\star, ~ \bg' \bzeta \leq t.& \label{eq:robustgp1}
\end{eqnarray}
One can obtain this result in a straightforward manner via conic duality \cite{BenN2001}.  Similarly, we have that:
\begin{eqnarray}
\sup_{\bq^{(i)} \in \cQ^{(i)}} ~ &\exp\left({\bq^{(i)}}' \bx\right) \leq \by_i& \nonumber \\ &\Updownarrow& \nonumber \\  \exists \btheta^{(i)} \in \R^m ~ \mathrm{s.t.} ~ &{\bM^{(i)}}' \btheta^{(i)} + \bx = 0, ~ \btheta^{(i)} \in {\cF^{(i)}}^\star, ~ \exp\left({\bh^{(i)}}' \btheta^{(i)} \right) \leq \by_i.& \label{eq:robustgp2}
\end{eqnarray}
The assumptions on strict feasibility are required to derive \eqref{eq:robustgp1} and \eqref{eq:robustgp2}.  Combining these results and eliminating $\by$, we have the desired conclusion.
\end{proof}

In summary, Proposition~\ref{theo:robustgp} gives a lifted representation for \eqref{eq:robustgp} with additional variables, and with constraints that can be specified within the CGP framework \eqref{eq:cgpprimal}.  As with expressions involving logarithms \eqref{eq:logliftproj}, the appearance of the additional variables $\bzeta$ and $\btheta^{(i)}$ in nonlinear expressions involving exponential and affine terms provides a powerful technique to obtain tractable CGP lift-and-project representations.  Hence, bounds on sum-of-exponentials functions with parameter uncertainty of the form \eqref{eq:robustgp} in which the uncertainty sets have tractable conic representations can be efficiently specified via CGPs.

\paragraph{Maximum over logarithms, products of sum-of-exponentials} Proposition~\ref{theo:robustgp} is also useful for providing CGP representations of the maximum of a composite expression involving a sum of exponentials, a product of sums of exponentials, and a logarithm of sums of exponentials.  By appealing to the techniques of Section~\ref{subsec:liftandproj}, such a composite expression may be transformed to an expression formed only of a sum of exponentials and an affine function.  As a result, uncertainty in the parameters of the exponentials of the original expression with products and logarithms can directly be interpreted as uncertainty in the parameters of the exponentials of the final expression involving only a sum of exponentials and an affine function.  Consequently, Proposition~\ref{theo:robustgp} can be applied to this final expression, thus leading to a tractable formulation for parameter uncertainty in expressions with products and logarithms of sums of exponentials.

\subsection{CGP-Representable Spectral Functions} \label{subsec:spectralcgp}

An elegant result of Lewis \cite{Lew1995}, extending earlier work by Von Neumann \cite{Von1937}, states that any convex function of a symmetric matrix that is invariant under conjugation by orthogonal matrices is a convex and permutation-invariant function of the eigenvalues.  This result along with the next proposition provides a recipe for CGP representations of certain convex sets in the space of symmetric matrices.

\begin{proposition}\cite{BenN2001} \label{theo:convexspectral}
Let $f : \R^n \rightarrow \R$ be a convex function that is invariant under permutation of its argument, and let $g: \Sym(n) \rightarrow \R$ be the convex function defined as $g(\bM) = f(\lambda(\bM))$.  Here $\lambda(\bM)$ refers to the list of eigenvalues of $M$.  Then we have that
\begin{eqnarray*}
&g(\bM) \leq t& \\ &\Updownarrow& \\ \exists \bx \in \R^n ~ \mathrm{s.t.} ~ &f(\bx) \leq t& \\ &\bx_1 \geq \cdots \geq \bx_n& \\ &s_r(\bM) \leq \bx_1 + \cdots + \bx_r, ~ r = 1,\dots,n-1& \\ &\mathrm{trace}(\bM) = \bx_1 + \cdots + \bx_n.&
\end{eqnarray*}
\end{proposition}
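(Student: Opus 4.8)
The plan is to prove both implications directly, using the classical link between the Ky Fan partial-sum inequalities for eigenvalues and the notion of \emph{majorization}. As a preliminary, note that since $f$ is invariant under permutations, $g$ is well defined and $g(\bM) = f(\lambda^{\downarrow}(\bM))$, where $\lambda^{\downarrow}(\bM) = (\lambda_1^{\downarrow}(\bM), \dots, \lambda_n^{\downarrow}(\bM))$ lists the eigenvalues of $\bM$ in nonincreasing order; also, by the definition of $s_r$, we have $s_r(\bM) = \sum_{i=1}^r \lambda_i^{\downarrow}(\bM)$.

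For the ``only if'' direction, suppose $g(\bM) \le t$ and take $\bx = \lambda^{\downarrow}(\bM)$. Then $\bx_1 \ge \cdots \ge \bx_n$ holds by construction; the constraints $s_r(\bM) \le \bx_1 + \cdots + \bx_r$ hold with equality because $s_r(\bM) = \sum_{i \le r} \lambda_i^{\downarrow}(\bM)$; the constraint $\mathrm{trace}(\bM) = \bx_1 + \cdots + \bx_n$ holds because the trace is the sum of all eigenvalues; and $f(\bx) = f(\lambda^{\downarrow}(\bM)) = g(\bM) \le t$. Hence the right-hand system is feasible.

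For the ``if'' direction, suppose $\bx \in \R^n$ satisfies the four listed conditions. I claim these say exactly that $\bx$ majorizes $\lambda^{\downarrow}(\bM)$: since $\bx$ is already sorted nonincreasingly, $\bx_1 + \cdots + \bx_r$ is the sum of its $r$ largest entries, and the hypotheses give $\bx_1 + \cdots + \bx_r \ge s_r(\bM) = \sum_{i \le r} \lambda_i^{\downarrow}(\bM)$ for $r = 1, \dots, n-1$, with equality of the full sums from the trace constraint. I would then invoke the Hardy--Littlewood--P\'olya theorem (equivalently, the Birkhoff--von Neumann theorem): $\bx$ majorizing $\lambda^{\downarrow}(\bM)$ implies $\lambda^{\downarrow}(\bM) = D\bx$ for some doubly stochastic $D$, hence $\lambda^{\downarrow}(\bM) = \sum_{\pi} \mu_\pi P_\pi \bx$ is a convex combination of coordinate permutations of $\bx$ (with $\mu_\pi \ge 0$ and $\sum_\pi \mu_\pi = 1$). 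Using convexity of $f$, then permutation-invariance, then feasibility, I obtain
\[
g(\bM) = f(\lambda^{\downarrow}(\bM)) \;\le\; \sum_\pi \mu_\pi f(P_\pi \bx) \;=\; \sum_\pi \mu_\pi f(\bx) \;=\; f(\bx) \;\le\; t,
\]
as desired.

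The bookkeeping with sorted eigenvalues and the identity $s_r(\bM) = \sum_{i \le r}\lambda_i^{\downarrow}(\bM)$ is routine; the one genuine ingredient --- and the step I would call the crux --- is the Hardy--Littlewood--P\'olya characterization of majorization as membership in the convex hull of coordinate permutations, which is precisely what couples with convexity and permutation-invariance of $f$ to give the bound. No continuity or strict-feasibility issues arise, since $f$ is finite-valued and therefore continuous on all of $\R^n$.
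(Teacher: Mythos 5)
Your proof is correct. The paper does not supply its own argument for this proposition --- it is quoted from \cite{BenN2001} --- and your route (the forward direction by taking $\bx$ to be the sorted eigenvalues, and the converse by reading the constraints as majorization of $\lambda(\bM)$ by $\bx$, then applying Hardy--Littlewood--P\'olya and Birkhoff--von Neumann followed by Jensen's inequality and permutation invariance of $f$) is exactly the standard proof found in that reference, so there is nothing to add.
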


By appealing to the fact that the function $s_r$ is SDP-representable from \eqref{eq:topreig}, we obtain the next example of a CGP-representable set in the space of symmetric matrices.

\paragraph{GP-representable functions of spectra} Consider any convex and permutation-invariant function $f: \R^n \rightarrow \R$ that is representable via CGP.  Then by appealing to Proposition~\ref{theo:convexspectral}, we have that the function $g: \Sym(n) \rightarrow \R$ defined as $g(\bM) = f(\lambda(\bM))$ is also representable via CGP.  A prominent special case of this class of CGP-representable functions is the Von-Neumann entropy of a matrix $\bM \in \Sym(n)$ with $\bM \succeq \bzero$:
\begin{equation}
H_{vn}(\bM) \triangleq -\mathrm{trace}[\bM \log(\bM)] = -\sum_{i=1}^n \lambda_i(\bM) \log(\lambda_i(\bM)) = -D(\lambda(\bM), \ones). \label{eq:vonneumann}
\end{equation}
Here $\lambda_i(\bM)$ refers to the $i$'th largest eigenvalue of $\bM$.  In quantum information theory, the Von-Neumann entropy is typically considered for positive semidefinite matrices with trace equal to one (i.e., quantum density matrices), but the function itself is concave over the full cone of positive semidefinite matrices.  As Von-Neumann entropy is concave and invariant under conjugation of the argument by any orthogonal matrix (i.e., it is a concave and permutation-invariant function of the eigenvalues of its argument), we can appeal to Proposition~\ref{theo:convexspectral} to obtain a CGP representation.  Specifically, as $-H_{vn}(\bM) = D(\lambda(\bM),\ones)$, sublevel sets of negative Von-Neumann entropy are representable via (the dual of a) CGP.

\section{Applications} \label{sec:applications}

In the preceding sections we described the conceptual unification of GPs and of conic optimization problems such as SDPs provided by the CGP framework.  In this section, we discuss the utility of CGPs in solving problems in a range of application domains; these problems fall outside the scope of GPs and of SDPs due to the limitatons of each of these frameworks, as discussed in the introduction.  All of the numerical experiments in this section employed the CVX modeling framework \cite{GraB2008} and the SDPT3 solver \cite{TutTT2003}.

\subsection{Permanent Maximization} \label{subsec:permmax}
The permanent of a matrix $\bM \in \R^{n \times n}$ is defined as:
\begin{equation}
\mathrm{perm}(\bM) \triangleq \sum_{\sigma \in \mathfrak{S}_n} \prod_{i=1}^n \bM_{i,\sigma(i)}, \label{eq:perm}
\end{equation}
where $\mathfrak{S}_n$ refers to the set of all permutations of the ordered set $[1,\dots,n]$.  The permanent of a matrix is a quantity that arises in combinatorics as the sum over weighted perfect matchings in a bipartite graph \cite{Min1984}, in geometry as the mixed volume of hyperrectangles \cite{Bet1992}, and in multivariate statistics in the computation of order statistics \cite{BapB1989}.  In this section we consider the problem of maximizing the permanent over a family of matrices.  This problem has received some attention for families of positive semidefinite matrices with specified eigenvalues \cite{DreJ1989,GroJEW1986}, but all of these works have tended to seek analytical solutions for very special instances of this problem.  Here we consider permanent maximization over general convex subsets of the set of nonnegative matrices.  (Recall that SDPs are useful for maximizing the \emph{determinant} of a symmetric positive definite matrix subject to affine and conic constraints \cite{VanBW1998}.)

\begin{problem}
Given a family of matrices $\cC \in \R^{n \times n}$, find a matrix in $\cC$ with maximum permanent.
\end{problem}

Permanent maximization is relevant for designing a bipartite network in which the average weighted matching is to be maximized subject to additional topology constraints on the network.  The question also arises in geometry in finding a configuration of a set of hyperrectangles so that their mixed volume is maximized.  Even computing the permanent of a fixed matrix is a $\#$P-hard problem and it is therefore considered to be intractable in general; accordingly a large body of research has investigated approximate computation of the permanent \cite{Bar1997,GurS2002,JerSV2004,LinSW2000}.  The class of elementwise nonnegative matrices has particularly received attention as these most commonly arise in application domains (e.g., bipartite graphs with nonnegative edge weights).  For such matrices, several \emph{deterministic} polynomial-time approximation algorithms provide exponential-factor (in the size $n$ of the matrix) approximations of the permanent, e.g. \cite{LinSW2000}.  The approach in \cite{LinSW2000} describes a technique based on solving a GP to approximate the permanent of a nonnegative matrix.  The approximation guarantees in \cite{LinSW2000} are based on the Van Der Waerden conjecture, which states that the matrix $\bM = \tfrac{1}{n} \ones \ones'$ has the strictly smallest permanent among all $n \times n$ doubly stochastic matrices (nonnegative matrices with all row and column sums equal to one).  This conjecture was proved originally in the 1980s \cite{Ego1981,Fal1981}, and Gurvits recently gave a very simple and elegant proof of this result \cite{Gur2008}.  In what follows, we describe the approach of \cite{LinSW2000}, but using the terminology developed by Gurvits in \cite{Gur2008}.

The permanent of a matrix $\bM \in \R^{n \times n}_+$ can be defined in terms of a particular coefficient of the following homogenous polynomial in $\by \in \R^n_+$:
\begin{equation}
p_\bM(\by_1,\dots,\by_n) = \prod_{i=1}^n \left(\sum_{j=1}^n \bM_{i,j} \by_j \right). \label{eq:permpoly}
\end{equation}
Specifically, the permanent of $\bM$ is the coefficient corresponding to the $\by_1 \cdots \by_n$ monomial term of $p_\bM(\by_1,\dots,\by_n)$:
\begin{equation*}
\mathrm{perm}(\bM) = \frac{\partial^n}{\partial \by_1 \cdots \partial \by_n} p_\bM(\by_1,\dots,\by_n)
\end{equation*}
In his proof of Van Der Waerden's conjecture, Gurvits defines the \emph{capacity} of a homogenous polynomial $p(\by_1,\dots,\by_n)$ of degree $n$ defined over $\by \in \R^n_+$ as follows:
\begin{eqnarray}
\mathrm{cap}(p) &\triangleq& \inf_{\by \in \R^n_+} ~~ \frac{p(\by)}{\by_1\cdots\by_n} \nonumber \\ &=& \inf_{\by \in \R^n_+, ~ \by_1\cdots\by_n = 1} ~~ p(\by). \label{eq:capacity}
\end{eqnarray}
Gurvits then proves the following result:
\begin{theorem}\cite{Gur2008} \label{theo:gurvitscapacity}
For any matrix $\bM \in \R^{n \times n}_+$, we have that:
\begin{equation*}
\frac{n!}{n^n} \mathrm{cap}(p_\bM) \leq \mathrm{perm}(\bM) \leq \mathrm{cap}(p_\bM).
\end{equation*}
Here the polynomial $p_\bM$ and its capacity $\mathrm{cap}(p_\bM)$ are as defined in \eqref{eq:permpoly} and \eqref{eq:capacity}.  Further if each column of $\bM$ has at most $k$ nonzeros, then the factor in the lower bound can be improved from $\tfrac{n!}{n^n}$ to $\left(\tfrac{k-1}{k}\right)^{(k-1)n}$.
\end{theorem}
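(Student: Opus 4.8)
The plan is to follow Gurvits's argument: the upper bound is immediate from nonnegativity, while the lower bound comes from an inductive ``differentiate-and-specialize'' procedure whose per-step loss is governed by a sharp univariate inequality.

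\emph{Upper bound.} Since $\bM \in \R^{n \times n}_+$, expanding $p_\bM(\by) = \prod_{i=1}^n \big(\sum_{j} \bM_{i,j}\by_j\big)$ in the monomial basis produces only nonnegative coefficients, and the coefficient of $\by_1 \cdots \by_n$ is exactly $\mathrm{perm}(\bM)$. Hence for every $\by \in \R^n_{++}$ we have $p_\bM(\by) \ge \mathrm{perm}(\bM)\,\by_1\cdots\by_n$ (the discarded terms are nonnegative), so $p_\bM(\by)/(\by_1\cdots\by_n) \ge \mathrm{perm}(\bM)$; taking the infimum over $\by$ gives $\mathrm{cap}(p_\bM) \ge \mathrm{perm}(\bM)$.

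\emph{Lower bound: the reduction.} I would induct on $n$ via a degree-lowering operation. For a homogeneous degree-$m$ polynomial $p$ in $m$ variables with nonnegative coefficients, write $p(\by', \by_m) = \sum_{j \ge 0} q_j(\by')\,\by_m^j$ with $\by' = (\by_1,\dots,\by_{m-1})$, and set $p^\downarrow(\by') := \partial_{\by_m} p \big|_{\by_m = 0} = q_1(\by')$, a homogeneous degree-$(m-1)$ polynomial in $\by'$. Two elementary facts: (i) the coefficient of $\by_1\cdots\by_m$ in $p$ equals that of $\by_1\cdots\by_{m-1}$ in $p^\downarrow$, so applying $(\cdot)^\downarrow$ a total of $n$ times to $p_\bM$ produces the constant $\mathrm{perm}(\bM)$; and (ii) $\deg_{\by_j} p^\downarrow \le \deg_{\by_j} p$ for all $j < m$, so the degree of every iterate of $p_\bM$ in each variable is bounded by the number of nonzeros in the corresponding column of $\bM$. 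The analytic heart is the \emph{capacity-loss lemma}: if, in addition, $p$ lies in the class of real-stable homogeneous polynomials with nonnegative coefficients -- a class that contains every $p_\bM$ and is closed under both $\partial_{\by_m}$ and the substitution $\by_m = 0$ -- then with $d := \deg_{\by_m} p$,
\[
\mathrm{cap}(p^\downarrow) \;\ge\; \Big(\tfrac{d-1}{d}\Big)^{d-1} \mathrm{cap}(p),
\]
with the convention $\big(\tfrac{0}{1}\big)^{0} = 1$. Granting the lemma, apply it at each of the $n$ reduction steps. Because the iterate after $n-m$ steps is homogeneous of degree $m$ in $m$ variables, the degree $d$ eliminated at that step satisfies $d \le m$; since $d \mapsto \big(\tfrac{d-1}{d}\big)^{d-1}$ is decreasing, the step loses a factor at least $\big(\tfrac{m-1}{m}\big)^{m-1}$. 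Multiplying over $m = n, n-1, \dots, 1$ and using $\mathrm{cap}(c) = c$ for a constant $c$ yields $\mathrm{perm}(\bM) \ge \Big(\prod_{m=2}^n \big(\tfrac{m-1}{m}\big)^{m-1}\Big)\,\mathrm{cap}(p_\bM)$, and the product telescopes to $n!/n^n$. For the refinement, if every column of $\bM$ has at most $k$ nonzeros then fact (ii) gives $d \le k$ at every step, so each of the $n$ steps loses a factor at least $\big(\tfrac{k-1}{k}\big)^{k-1}$, yielding the factor $\big(\tfrac{k-1}{k}\big)^{(k-1)n}$.

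\emph{The main obstacle: the univariate inequality.} Everything rests on the capacity-loss lemma, which localizes to a one-variable statement. Fix $\by' \in \R^{m-1}_{++}$ and set $\phi(t) = p(\by', t)$; real-stability of $p$ forces $\phi$ to be a real-rooted univariate polynomial, whose roots are nonpositive since its coefficients are nonnegative. Because $\mathrm{cap}(p) = \inf_{\by'} \tfrac{1}{\by_1\cdots\by_{m-1}}\inf_{t > 0} \tfrac{\phi(t)}{t}$ and $\mathrm{cap}(p^\downarrow) = \inf_{\by'}\tfrac{1}{\by_1\cdots\by_{m-1}}\phi'(0)$, it suffices to show that for any $\phi(t) = \sum_{j=0}^d c_j t^j$ with $c_j \ge 0$ and all roots real,
\[
\phi'(0) \;\ge\; \Big(\tfrac{d-1}{d}\Big)^{d-1}\inf_{t > 0}\frac{\phi(t)}{t}.
\]
(The case $\phi(0) = 0$ is trivial, so assume $\phi(0) > 0$.) I would prove this by taking a minimizer $t_0 > 0$ of $\phi(t)/t$, which satisfies $t_0\,\phi'(t_0) = \phi(t_0)$, i.e.\ $\sum_{i=1}^d \tfrac{1}{t_0 + s_i} = \tfrac{1}{t_0}$ where $-s_1,\dots,-s_d \le 0$ are the roots of $\phi$ with multiplicity; expressing $\phi'(0)$ and $\phi(t_0)$ through the $s_i$ turns the claim into a symmetric minimization over $s_1,\dots,s_d \ge 0$ subject to that single equality constraint. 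By Lagrange multipliers -- together with a convexity or Schur-convexity check that the symmetric critical point is the global minimum -- the minimum occurs at $s_1 = \cdots = s_d$, i.e.\ at $\phi(t) = (1+t)^d$ up to scaling, for which $\phi'(0) = d$ and $\inf_{t>0}(1+t)^d/t = d^d/(d-1)^{d-1}$, reproducing the constant exactly. The step I expect to require the most care is verifying that the symmetric configuration is a genuine global minimizer of this constrained symmetric problem, not merely a stationary point; the rest is routine bookkeeping.
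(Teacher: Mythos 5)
The paper does not prove this theorem; it is imported verbatim from Gurvits \cite{Gur2008}, with only the remark that the upper bound is easy and the lower bound is the technical novelty. So there is no in-paper proof to compare against, and your proposal should be judged as a reconstruction of the cited argument --- which it is, faithfully: the upper bound via nonnegativity of the discarded monomials, and the lower bound via the operator $p \mapsto \partial_{\by_m} p \big|_{\by_m = 0}$ applied $n$ times, with per-step capacity loss $\left(\tfrac{d-1}{d}\right)^{d-1}$ controlled by $d \le m$ (general case) or $d \le k$ (sparse columns), and the telescoping product $\prod_{m=2}^{n}\left(\tfrac{m-1}{m}\right)^{m-1} = n!/n^n$. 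Your bookkeeping for both constants is correct, as is the reduction of the capacity-loss lemma to the univariate statement about real-rooted $\phi$ with nonnegative coefficients.

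The one place where your route diverges from Gurvits's and leaves a genuine hole is exactly the step you flag: establishing that the symmetric configuration $s_1 = \cdots = s_d$ globally minimizes the constrained symmetric problem. Lagrange multipliers only certify stationarity, and the Schur-convexity verification you defer is not routine. Gurvits avoids this entirely with a one-line AM--GM argument that you may as well adopt, since it lands on the same extremal polynomial you identified: normalizing $\phi(0)=1$ and writing $\phi(t) = \prod_{i=1}^d (1 + t/r_i)$ with $r_i > 0$, AM--GM gives $\phi(t) \le \left(1 + \tfrac{t}{d}\sum_i r_i^{-1}\right)^d = \left(1 + \tfrac{\phi'(0)\,t}{d}\right)^d$, whence $\inf_{t>0} \phi(t)/t \le \phi'(0) \cdot \inf_{u>0} (1+u/d)^d/u = \phi'(0) \cdot d^{d-1}/(d-1)^{d-1}$, which is the desired inequality with no optimization over root configurations. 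With that substitution (and a sentence disposing of the degenerate cases $\phi(0)=0$ and $d \le 1$, which you already note are trivial), your proof is complete and is essentially Gurvits's. The remaining ingredients you assert without proof --- real stability of $p_\bM$ and closure of real stability under $\partial_{\by_m}$ and under setting $\by_m = 0$ --- are standard but are the other nontrivial inputs a full write-up would need to source or prove.
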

Gurvits in fact proves a more general statement involving so-called stable polynomials, but the above restricted version will suffice for our purposes.  The upper-bound in this statement is straightforward to prove; it is the lower bound that is the key technical novelty.  Thus, if one could compute the capacity of the polynomial $p_\bM$ associated to a nonnegative matrix $\bM$, then one can obtain an exponential-factor approximation of the permanent of $\bM$ as $\tfrac{n!}{n^n} \approx \exp(-n)$.  In order to compute the capacity of $p_\bM$ via GP, we apply the transformation $\bx = \log\{\by\}$ in \eqref{eq:capacity} and solve the following program\footnote{Such a logarithmic transformation of the variables is also employed in converting a GP specified in terms of non-convex posynomial functions to a GP in convex form; see \cite{BoyKVH2007,DufPZ1967} for more details.}:
\begin{eqnarray}
\mathrm{cap}(p_\bM) &=& \inf_{\bx \in \R^n} ~ \prod_{i=1}^n \left(\sum_{j=1}^n \bM_{i,j} \exp(\bx_j) \right) ~~ \mathrm{s.t.} ~~ \ones' \bx = 0 \nonumber \\ &=& \inf_{\bx \in \R^n, \bbeta \in \R^n, \delta \in \R} ~ \exp(\delta) ~~ \mathrm{s.t.} ~~ \ones' \bx = 0, ~ \sum_{i,j=1}^n \bM_{i,j} \exp(\bx_j - \bbeta_i - 1) + \mathbf{\ones}' \bbeta \leq \delta. \label{eq:capgp}
\end{eqnarray}
In obtaining the second formulation, we appeal to the expression \eqref{eq:prodsumexp3} for CGP representations of upper-bound constraints on a product of sums of exponentials.

\paragraph{Solution.} Focusing on a set $\cC \subset \R^{n \times n}_+$ of entrywise nonnegative matrices, our proposal to approximately maximize the permanent is to find $\bM \in \cC$ with maximum capacity $\mathrm{cap}(p_\bM)$, which leads to the following consequence of Theorem~\ref{theo:gurvitscapacity}:
\begin{proposition} \label{theo:permcapmax}
Let $\cC \in \R^{n \times n}_+$ be a set of nonnegative matrices, and consider the following two quantities:
\begin{eqnarray*}
\hat{\bM}_{\mathrm{perm}} &=& {\arg\sup}_{\bM \in \cC} ~ \mathrm{perm}(\bM) \\
\hat{\bM}_{\mathrm{cap}} &=& {\arg\sup}_{\bM \in \cC} ~ \mathrm{cap}(p_\bM)
\end{eqnarray*}
Then we have that
\begin{equation*}
\frac{n!}{n^n} \mathrm{perm}(\hat{\bM}_{\mathrm{perm}}) \leq \mathrm{perm}(\hat{\bM}_{\mathrm{cap}}) \leq \mathrm{perm}(\hat{\bM}_{\mathrm{perm}}).
\end{equation*}
The factor in the lower bound can be improved from $\tfrac{n!}{n^n}$ to $\left(\tfrac{k-1}{k}\right)^{(k-1)n}$ if every matrix in $\cC$ has at most $k$ nonzeros in each column.
\end{proposition}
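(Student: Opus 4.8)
The plan is to obtain both inequalities purely by chaining the two estimates of Theorem~\ref{theo:gurvitscapacity} with the two optimality properties that define $\hat{\bM}_{\mathrm{perm}}$ and $\hat{\bM}_{\mathrm{cap}}$; no new analysis is needed beyond what is already encapsulated in Gurvits's theorem.

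First I would dispatch the upper bound $\mathrm{perm}(\hat{\bM}_{\mathrm{cap}}) \leq \mathrm{perm}(\hat{\bM}_{\mathrm{perm}})$. This is immediate: $\hat{\bM}_{\mathrm{cap}}$ is an element of $\cC$, and $\hat{\bM}_{\mathrm{perm}}$ is by definition a maximizer of $\mathrm{perm}(\cdot)$ over $\cC$, so $\mathrm{perm}$ evaluated at any point of $\cC$—in particular at $\hat{\bM}_{\mathrm{cap}}$—is at most $\mathrm{perm}(\hat{\bM}_{\mathrm{perm}})$.

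For the lower bound I would compose three estimates in order. (i) Apply the lower half of Theorem~\ref{theo:gurvitscapacity} to the matrix $\hat{\bM}_{\mathrm{cap}} \in \R^{n\times n}_+$ to get $\mathrm{perm}(\hat{\bM}_{\mathrm{cap}}) \geq \tfrac{n!}{n^n}\,\mathrm{cap}(p_{\hat{\bM}_{\mathrm{cap}}})$. (ii) Since $\hat{\bM}_{\mathrm{cap}}$ maximizes $\mathrm{cap}(p_{(\cdot)})$ over $\cC$ and $\hat{\bM}_{\mathrm{perm}} \in \cC$, we have $\mathrm{cap}(p_{\hat{\bM}_{\mathrm{cap}}}) \geq \mathrm{cap}(p_{\hat{\bM}_{\mathrm{perm}}})$. (iii) Apply the upper half of Theorem~\ref{theo:gurvitscapacity} to $\hat{\bM}_{\mathrm{perm}}$ to get $\mathrm{cap}(p_{\hat{\bM}_{\mathrm{perm}}}) \geq \mathrm{perm}(\hat{\bM}_{\mathrm{perm}})$. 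Stringing (i)–(iii) together yields $\mathrm{perm}(\hat{\bM}_{\mathrm{cap}}) \geq \tfrac{n!}{n^n}\,\mathrm{perm}(\hat{\bM}_{\mathrm{perm}})$, as claimed. For the sharper constant, I would replace only step (i): if every matrix in $\cC$ has at most $k$ nonzeros per column, then in particular $\hat{\bM}_{\mathrm{cap}}$ does, so the improved lower bound $\mathrm{perm}(\hat{\bM}_{\mathrm{cap}}) \geq \left(\tfrac{k-1}{k}\right)^{(k-1)n}\mathrm{cap}(p_{\hat{\bM}_{\mathrm{cap}}})$ from Theorem~\ref{theo:gurvitscapacity} applies, and steps (ii)–(iii) are unchanged.

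\textbf{Anticipated obstacle.} There is no substantive analytic difficulty; the only point deserving care is the well-definedness of $\hat{\bM}_{\mathrm{perm}}$ and $\hat{\bM}_{\mathrm{cap}}$, i.e., that the two suprema over $\cC$ are actually attained. The $\arg\sup$ notation in the statement presupposes this (for instance when $\cC$ is compact, using continuity of $\mathrm{perm}$ and of $\bM \mapsto \mathrm{cap}(p_\bM)$); for a fully rigorous treatment one would either add such a hypothesis or run the argument with $\varepsilon$-approximate maximizers and let $\varepsilon \to 0$, which changes nothing in the final inequalities. I would also remark in passing that the computationally relevant quantity $\mathrm{cap}(p_{\hat{\bM}_{\mathrm{cap}}})$ is obtained by optimizing the GP~\eqref{eq:capgp} jointly over $\bM \in \cC$ and the capacity variables, which is what makes the construction effective, though this plays no role in proving the stated bounds.
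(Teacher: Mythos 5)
Your proof is correct and is essentially identical to the paper's: both chain the two halves of Theorem~\ref{theo:gurvitscapacity} (applied to $\hat{\bM}_{\mathrm{cap}}$ and $\hat{\bM}_{\mathrm{perm}}$ respectively) with the two defining optimality properties, yielding exactly the same four-link inequality chain. Your remark on attainment of the suprema is a reasonable caveat the paper leaves implicit, but it does not change the argument.
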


\begin{proof}
The proof follows from a direct application of Theorem~\ref{theo:gurvitscapacity}:
\begin{equation*}
\frac{n!}{n^n} \mathrm{perm}(\hat{\bM}_{\mathrm{perm}}) \leq \frac{n!}{n^n} \mathrm{cap}(p_{\hat{\bM}_{\mathrm{perm}}}) \leq \frac{n!}{n^n} \mathrm{cap}(p_{\hat{\bM}_{\mathrm{cap}}}) \leq \mathrm{perm}(\hat{\bM}_{\mathrm{cap}}) \leq \mathrm{perm}(\hat{\bM}_{\mathrm{perm}}).
\end{equation*}
The first and third inequalities are a result of Theorem~\ref{theo:gurvitscapacity}, and the second and fourth inequalities follow from the definitions of $\hat{\bM}_{\mathrm{cap}}$ and of $\hat{\bM}_{\mathrm{perm}}$ respectively. The improvement in the lower bound also follows from Theorem~\ref{theo:gurvitscapacity}.
\end{proof}

In summary, maximizing the capacity with respect to the family $\cC$ provides an approximation to the permanent-maximizing element of $\cC$.  Specifically, for sets $\cC$ that have tractable conic representations, the next result provides a CGP solution for capacity maximization:

\begin{proposition} \label{theo:capmaxcgp}
Suppose that $\cC \subset \R^{n \times n}_+$ is defined as:
\begin{equation*}
\cC = \left\{\bM ~|~ \bM \geq \bzero, ~ \bg + \cL(\bM) \in \K \right\}
\end{equation*}
for $\cL: \R^{n \times n} \rightarrow \R^\ell$ a linear operator, for $\bg \in \R^\ell$, and for $\K \subset \R^\ell$ a convex cone.  Suppose further that $\cC$ is a compact subset, and that there exists a point that strictly satisfies the nonnegativity and conic constraints.  Then the problem of maximizing capacity with respect to the set $\cC$ can be solved via CGP as follows:
\begin{eqnarray*}
\sup_{\bM \in \cC} \mathrm{cap}(p_\bM) = \inf_{\bx \in \R^n, \bbeta \in \R^n, \bzeta \in \R^\ell} & \exp(\bg' \bzeta + \ones' \bbeta) & \\ \mbox{s.t.} & [\cL^\dag(\bzeta)]_{i,j} + \exp(\bx_j - \bbeta_i - 1) \leq 0, & \forall ~ i,j = 1,\dots,n \\ & \ones' \bx = 0 & \\ & \bzeta \in \K^\star &
\end{eqnarray*}
Here $\cL^\dag$ refers to the adjoint of the operator $\cL$.
\end{proposition}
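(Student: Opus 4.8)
The plan is to combine three ingredients: the geometric-program formula \eqref{eq:capgp} for $\mathrm{cap}(p_\bM)$, a minimax interchange justified by the compactness and convexity of $\cC$, and conic duality for the resulting inner maximization over $\bM$.

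First I would eliminate the slack variable $\delta$ in \eqref{eq:capgp}, writing, for each fixed $\bM \in \cC$,
\[
\mathrm{cap}(p_\bM) \;=\; \inf_{\substack{\bx \in \R^n,\ \bbeta \in \R^n \\ \ones'\bx = 0}} \ \exp\!\Big( \textstyle\sum_{i,j=1}^n \bM_{i,j}\,\exp(\bx_j - \bbeta_i - 1) + \ones'\bbeta \Big) .
\]
Denote the objective on the right by $\phi(\bM,\bx,\bbeta)$, so that $\sup_{\bM\in\cC}\mathrm{cap}(p_\bM) = \sup_{\bM\in\cC}\inf_{\bx,\bbeta:\, \ones'\bx=0}\phi(\bM,\bx,\bbeta)$. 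The next step is to interchange the supremum over $\bM$ and the infimum over $(\bx,\bbeta)$. Because $\bM \geq \bzero$ on $\cC$, the exponent is a convex function of $(\bx,\bbeta)$ and an affine (in particular concave) function of $\bM$; composing with the increasing convex map $\exp$, the function $\phi$ is continuous, convex (hence quasi-convex) in $(\bx,\bbeta)$, and a monotone transform of an affine function (hence quasi-concave) in $\bM$. Since $\cC$ is convex and compact, Sion's minimax theorem applies and gives
\[
\sup_{\bM\in\cC}\ \inf_{\bx,\bbeta:\, \ones'\bx=0}\ \phi(\bM,\bx,\bbeta) \;=\; \inf_{\bx,\bbeta:\, \ones'\bx=0}\ \sup_{\bM\in\cC}\ \phi(\bM,\bx,\bbeta) \;=\; \inf_{\bx,\bbeta:\, \ones'\bx=0}\ \exp\!\Big(\sup_{\bM\in\cC}\textstyle\sum_{i,j}\bM_{i,j}\exp(\bx_j-\bbeta_i-1)\ +\ \ones'\bbeta\Big),
\]
where the last equality uses that $\exp$ is increasing and continuous, so it commutes with the supremum.

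Second, for each fixed $(\bx,\bbeta)$ I would dualize the inner linear maximization $\sup\{\sum_{i,j}\bM_{i,j}\,a_{i,j} : \bM \geq \bzero,\ \bg + \cL(\bM) \in \K\}$, where $a_{i,j} := \exp(\bx_j - \bbeta_i - 1) \geq 0$. Relaxing the conic constraint with a multiplier $\bzeta \in \K^\star$ and maximizing the Lagrangian over $\bM \geq \bzero$ — the maximum is finite, and equal to $\bg'\bzeta$, exactly when $a_{i,j} + [\cL^\dag(\bzeta)]_{i,j} \leq 0$ for all $i,j$ — produces the conic dual
\[
\inf\Big\{\, \bg'\bzeta \ :\ \bzeta \in \K^\star,\ \ [\cL^\dag(\bzeta)]_{i,j} + \exp(\bx_j - \bbeta_i - 1) \leq 0 \ \ \forall\, i,j \,\Big\}.
\]
Because $\cC$ is compact the primal value $\sup_{\bM\in\cC}\sum_{i,j}\bM_{i,j}a_{i,j}$ is finite, and because $\cC$ contains a point strictly satisfying the nonnegativity and conic constraints, Slater's condition holds and strong conic duality gives equality between this primal value and the dual infimum above. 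Substituting this equality into the displayed $\inf$--$\sup$ expression, and once more using that $\exp$ commutes with infima, the inner infimum over $\bzeta$ merges into the outer infimum and we obtain precisely
\[
\inf_{\bx \in \R^n,\ \bbeta \in \R^n,\ \bzeta \in \R^\ell}\ \exp(\bg'\bzeta + \ones'\bbeta)\quad \text{s.t.}\quad [\cL^\dag(\bzeta)]_{i,j} + \exp(\bx_j - \bbeta_i - 1) \leq 0\ \ \forall\, i,j,\ \ \ones'\bx = 0,\ \ \bzeta \in \K^\star ,
\]
which is the asserted CGP: its objective is monotone in an affine function, the constraints $[\cL^\dag(\bzeta)]_{i,j} + \exp(\bx_j - \bbeta_i - 1) \leq 0$ are combined exponential/affine constraints of the type \eqref{eq:expaff}, $\ones'\bx = 0$ is affine, and $\bzeta \in \K^\star$ is a conic constraint, so the problem fits the CGP form \eqref{eq:cgpprimal}.

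The main obstacle is the middle step — the interchange of $\sup_{\bM}$ and $\inf_{\bx,\bbeta}$ — and it is precisely here that the hypotheses of the proposition are used: compactness and convexity of $\cC$ supply the compact convex side needed for Sion's minimax theorem (and the finiteness of the inner maximization), while the strict-feasibility hypothesis on $\cC$ is exactly Slater's condition ensuring no duality gap in the subsequent conic dualization. Verifying the semicontinuity and quasi-convexity/quasi-concavity requirements of Sion's theorem for $\phi$ is routine given that $\bM \geq \bzero$ on $\cC$ and that $\exp$ is monotone and convex; likewise, the conic-duality computation and the manipulations that let $\exp$ pass through the infima and suprema are standard.
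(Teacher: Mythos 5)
Your proof is correct and follows essentially the same route as the paper's: start from \eqref{eq:capgp}, interchange the supremum over $\bM$ with the infimum over $(\bx,\bbeta)$ using compactness and convexity of $\cC$, and then dualize the inner linear maximization over $\bM$ via conic duality under the Slater hypothesis (the paper packages this last step as an appeal to Proposition~\ref{theo:robustgp}). The only cosmetic difference is that the paper first passes to $\log(\mathrm{cap}(p_\bM))$ so that the saddle function is genuinely convex--concave in the sense of \cite{Roc1970}, whereas you keep the outer exponential and invoke Sion's theorem for quasi-convex/quasi-concave functions; since $\exp$ is an increasing continuous bijection that commutes with infima and suprema, the two are equivalent.
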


\begin{proof}
By plugging in the expression \eqref{eq:capgp} for the capacity of a polynomial, we have:
\begin{eqnarray*}
\sup_{\bM \in \cC} \log(\mathrm{cap}(p_\bM)) &=& \sup_{\bM \in \cC} ~ \inf_{\bx, \bbeta \in \R^n, \ones' \bx = 0} ~ \sum_{i,j=1}^n \bM_{i,j} \exp(\bx_j - \bbeta_i - 1) + \ones' \bbeta \\ &=& \inf_{\bx, \bbeta \in \R^n, \ones' \bx = 0} ~ \sup_{\bM \in \cC} ~ \sum_{i,j=1}^n \bM_{i,j} \exp(\bx_j - \bbeta_i - 1) + \ones' \bbeta.
\end{eqnarray*}
In the second expression, the supremum and infimum are switched as the following conditions are satisfied \cite{Roc1970}: the objective function is concave in $\bM$ for fixed $\bx, \bbeta$ and convex in $\bx,\bbeta$ for fixed $\bM$, the constraints on $\bM$ and on $\bx,\bbeta$ are convex, and the constraint set $\cC$ for $\bM$ is compact.  Given the particular form of $\cC$ and appealing to Proposition~\ref{theo:robustgp}, we have the desired result.
\end{proof}

The steps of this proof follow for the most part from the discussion in Section~\ref{subsec:liftandproj} on constraints on products of functions, each of which is a maximum of a sum of exponentials (see also Section~\ref{subsec:maxsumexpcgp}).  The main novelty in the proof here is the switching of the infimum and supremum, which is based on saddle-point theory from convex analysis \cite{Roc1970}.


\subsection{Robust GP} \label{subsec:robustgp}
As our next application, we describe the utility of CGP in addressing the problem of computing solutions to GPs that are robust to variability in the input parameters of the GP.   Robust GPs arise in power control problems in communication systems as well as in robust digital circuit gate sizing, see e.g., \cite{BoyKPH2005}.

\begin{problem}
Given convex sets $\cC \subset \R^k_+$ and $\cQ \subset \R^{n \times k}$, obtain a tractable reformulation of the following constraint in a decision variable $\bx \in \R^n$:
\begin{equation}
\sup_{\bc \in \cC, \bQ \in \cQ} ~ \sum_{i=1}^k \bc_i \exp([\bQ' \bx]_i) \leq 1. \label{eq:rgp}
\end{equation}
\end{problem}

A robust GP is an optimization problem in which an affine function is minimized subject to affine constraints and constraints of the form \eqref{eq:rgp}.  Such optimization problems in which one seeks solutions that are robust to parameter uncertainty have been extensively investigated in the field of robust optimization \cite{BenEN2009,BenN1998}, and exact, tractable reformulations of robust convex programs are available in a number of settings, e.g., for robust LPs.  However, progress has been limited in the context of GPs for general convex uncertainty sets $\cC, \cQ$.

In their seminal work on robust convex optimization \cite{BenN1998}, Ben-Tal and Nemirovski obtained an exact, tractable reformulation of robust GPs in which a very restricted form of coefficient uncertainty is allowed in a sum-of-exponentials function -- specifically, the set $\cQ \subset \R^{n \times k}$ is a singleton set (no uncertainty) and $\cC \subset \R^k_+$ must be a convex ellipsoid in $\R^k_+$ specified by a quadratic form defined by an elementwise nonnegative matrix.  The reformulation given in \cite{BenN1998} for such robust GPs is itself a GP with additional variables, which can be solved efficiently.

In subsequent work, Hsiung et al \cite{HsiKB2008} considered sum-of-exponentials functions with the coefficients absorbed into the exponent as follows:
\begin{equation*}
\sup_{[\bd, \bQ] \in \cD} ~ \sum_{i=1}^k\exp([\bQ' \bx + \bd]_i) \leq 1
\end{equation*}
Therefore, the uncertainty in the coefficients and the exponents is ``coupled'' via the set $\cD \subset \R^{n \times (k+1)}$.  For such constraints with $\cD$ being either a polyhedral set or an ellipsoidal set, Hsiung et al \cite{HsiKB2008} obtain tractable but inexact reformulations via piecewise linear relaxations, with the reformulations again being GPs.

\paragraph{Solution.} In order to develop a framework that allows for flexible specifications of uncertainty sets in robust GP, yet one in which we can obtain \emph{exact, tractable} reformulations via CGP, we consider the following general setting.  Suppose the uncertainty set $\cC \in \R^k_+$ is \emph{any} convex set with a tractable conic representation, and suppose that $\cQ \in \R^{n \times k}$ is decoupled columnwise as follows:
\begin{equation}
\cQ = \{\bQ ~|~ \bQ^{(i)} \in \cQ^{(i)}\}, \label{eq:rgpdecouple}
\end{equation}
where $\bQ^{(i)} \in \R^n$ is the $i$'th column of $\bQ$ and $\cQ^{(i)} \subset \R^n$ is \emph{any} convex set with a tractable conic representation (this set specifies uncertainty in the $i$'th column of $\bQ$, independent of the other columns of $\bQ$).  For such forms of uncertainty, Proposition~\ref{theo:robustgp} provides a CGP reformulation of \eqref{eq:rgp}.  Specifically, when the uncertainty sets $\cC$ and $\cQ^{(i)}$ for $i=1,\dots,k$ have tractable conic representations, we can obtain an \emph{exact} reformulation of \eqref{eq:rgp} that is efficiently computed via CGP.

In contrast to previous results on robust GP, note that the form of uncertainty for which we obtain an efficient and exact CGP reformulation is significantly more general than that considered in \cite{BenN1998}, in which $\cC$ can only be a restricted kind of ellipsoidal set and $\cQ$ must be a singleton set (no uncertainty).  On the other hand, Hsiung et al \cite{HsiKB2008} consider robust GPs in a different form and with polyhedral or ellipsoidal uncertainty that may be coupled across $\cQ^{(i)}$, but their reformulation is inexact.  A crucial distinction of our approach relative to those in \cite{BenN1998} and in \cite{HsiKB2008} is that our reformulation is a CGP, while those described in \cite{BenN1998} and in \cite{HsiKB2008} are GPs.  Hence, the additional flexibility provided by CGPs via combined exponential/affine constraints and conic constraints -- and the appearance of additional ``lifting'' variables in both these types of constraints -- plays a crucial role in obtaining exact and tractable reformulations for fairly general types of uncertainty sets in robust GPs.

\paragraph{Example.} To illustrate these distinctions concretely, consider a specific instance of a robust GP constraint.  Suppose $\cC \subset \R^k$ is a convex set of the form:
\begin{equation*}
\cC = \{\bc ~|~ \bc \geq \bzero, ~ \bg + \bF \bc \succeq \bzero \}
\end{equation*}
for $\bg \in \Sym(\ell)$ and $\bF : \R^k \rightarrow \Sym(\ell)$ a linear operator.  Suppose also that each $\cQ^{(i)} \subset \R^n$ for $i = 1,\dots,k$ is a set of the form:
\begin{equation*}
\cQ^{(i)} = \left\{\bq^{(i)} ~|~ \bh^{(i)} + \bM^{(i)} \bq^{(i)} \succeq \bzero \right\}
\end{equation*}
for $\bh^{(i)} \in \Sym(m)$ and $\bM^{(i)} : \R^n \rightarrow \Sym(m)$ a linear operator.  With these uncertainty sets, we have from Proposition~\ref{theo:robustgp} that
\begin{eqnarray*}
&\sup_{\bc \in \cC, \bQ^{(i)} \in \cQ^{(i)}} ~ \sum_{i=1}^k \bc_i \exp([\bQ' \bx]_i) \leq 1& \\ &\Updownarrow& \\ & \exists \bzeta \in \Sym(\ell), ~ \btheta^{(i)} \in \Sym(m) ~\mathrm{for}~ i = 1,\dots,k ~ \mathrm{s.t.}& \\ &
\bg' \bzeta \leq 1, ~ \bzeta \succeq \bzero, & \\ &{\bM^{(i)}}^\dag \big(\btheta^{(i)} \big) + \bx = \bzero, ~ \exp\left({\bh^{(i)}}' \btheta^{(i)} \right) + \left[\bF^\dag \left(\bzeta \right)\right]_i \leq 0, ~ \btheta^{(i)} \succeq \bzero, ~\mathrm{for} ~ i = 1,\dots,k&
\end{eqnarray*}
Here ${\bM^{(i)}}^\dag$ and $\bF^\dag$ represent the adjoint of the operators ${\bM^{(i)}}$ and $\bF$ respectively.  Note that in this formulation, the uncertainty sets $\cC$ and $\cQ^{(i)}$ are essentially arbitrary SDP-representable sets.  Such forms of robust GP cannot be handled by previous approaches \cite{BenN1998,HsiKB2008}, but the expressive power of combined exponential/affine constraints and conic constraints enables us to obtain exact, tractable representations via CGP.

\subsection{Hitting Times of Dynamical Systems} \label{subsec:hittingtime}

Consider a linear dynamical system with state-space equations as follows:
\begin{equation}
\dot{\bx}(t)= \bA \bx(t), \label{eq:dynsystem}
\end{equation}
where the state $\bx(t) \in \mathbb{R}^n$ for $t \geq 0$.  We assume throughout this section that the transition matrix $\bA \in \R^{n \times n}$ is diagonal; otherwise, one can always change to the appropriate \emph{modal coordinates} given by the eigenvectors of $\bA$ (assuming $\bA$ is diagonalizable).  The diagonal entries of $\bA$ are called the \emph{modes} of the system.  Suppose that the parameters of the dynamical system can take on a range of possible values with $\bA \in \mathcal{A}$ and $\bx(0) \in \xinit$; the set $\mathcal{A}$ specifies a set of possible modes and the set $\xinit \subseteq \R^n$ specifies the set of possible initial conditions.  Suppose also that we are given a \emph{target set} $\xtarget \subseteq \mathbb{R}^n$, and we wish to find the smallest time required for the system to reach a state in $\xtarget$ from an arbitrary initial state in $\xinit$.  Formally, we define the \emph{worst-case hitting time} of the dynamical system \eqref{eq:dynsystem} to be:
\begin{equation}
\tau(\xinit, \xtarget, \mathcal{A}) \triangleq \inf \Big\{ t ~|~ t \geq 0 ~\mathrm{and}~ \bx(0) \exp \left\{\bA t \right\} \in \xtarget ~ \mathrm{for~all} ~ \bx(0) \in \xinit, \bA \in \mathcal{A} \Big\}. \label{eq:hittingtime}
\end{equation}
Indeed, for an initial state $\bx(0)$, the state of the system \eqref{eq:dynsystem} at time $t$ is given by $\bx(t)= \bx(0) \exp \left\{\bA t \right\}$; consequently, the quantity $\tau(\xinit, \xtarget,\mathcal{A})$ represents the amount of time that the worst-case trajectory of the system, taken over all initial conditions $\xinit$ and mode values $\mathcal{A}$, requires to enter the target set $\xtarget$.

\begin{problem}
Compute the hitting time $\tau(\xinit, \xtarget, \mathcal{A})$ \eqref{eq:hittingtime} of the linear dynamical system \eqref{eq:dynsystem} with the possible initial states, the target set, and the possible set of modes given by $\xinit$, $\xtarget$, and $\mathcal{A}$ respectively.
\end{problem}

Hitting times are of interest in system analysis and verification \cite{PraJ20004}.  As an example, suppose that a system has the property that $\tau(\xinit, \xtarget,\mathcal{A}) = \infty$; this provides a proof that from certain initial states in $\xinit$, the system never enters the target state $\xtarget$. On the other hand, if $\tau(\xinit, \xtarget,\mathcal{A}) = 0$, we have a certificate that $\xtarget \subseteq \xinit$.  While verification of linear systems has been extensively studied via Lyapunov and barrier certificate methods, the study of hitting times has received relatively little attention (with a few exceptions such as \cite{YazP2004}).  In particular, the approaches in \cite{YazP2004} can lead to loose bounds as the worst-case hitting time is computed based on box-constrained outer approximations of $\xinit$ and $\xtarget$.

\paragraph{Solution.} In this section we show that for a particular class of these problems, the hitting time can be computed \emph{exactly} via a CGP.  Specifically, we make the following assumptions regarding the structure of the dynamical system \eqref{eq:dynsystem}:

\begin{itemize}

\item The set of modes is given by
    \begin{equation*}
    \mathcal{A} = \left\{\bA ~|~ \bA ~ \mathrm{diagonal~with}~\bA_{j,j} \in \left[\ell_j, u_j \right] ~ \forall j=1,\dots,n \right\}
    \end{equation*}

\item The set of initial states $\xinit \subseteq \mathbb{R}^n_+$ is given by a convex set with a tractable representation via affine and conic constraints.  In particular, as in Proposition~\ref{theo:robustgp}, $\xinit$ is specified as follows:
    \begin{equation*}
    \xinit = \{\bx \in \R^n_+ ~|~ \bg + \bF \bx \in \K \}.
    \end{equation*}
    Here $\bg \in \R^m$, $\bF \in \R^{m \times n}$, and the cone $\K \subset \R^m$ is assumed to be tractable to compute.

\item $\xtarget \subset \mathbb{R}^n$ is representable as the intersection of finitely many half spaces:
    \begin{equation*}
    \xtarget = \{ \bx \in \R^n_+ ~|~ {\bc^{(i)}}' \bx \leq \bd_i, ~ i=1,\dots, k,~ \mathrm{where~each}~ \bc^{(i)} \in \R^n_+, \bd \in \R^k_+\}.
    \end{equation*}
\end{itemize}

\begin{figure}
\centering
\includegraphics[scale=0.3]{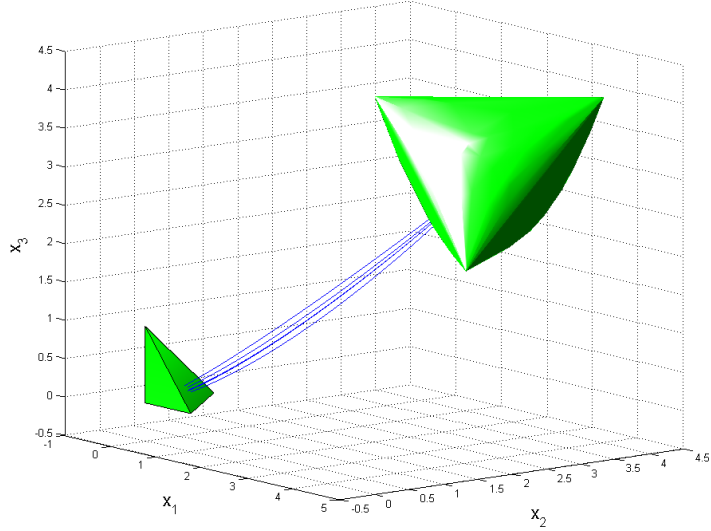}
\caption{Some sample trajectories of a linear system from $\xinit$ to $\xtarget$ for the dynamical system described in the example in Section~\ref{subsec:hittingtime}.  The (shifted) elliptope specifies the set of feasible starting points, while the tetrahedron specifies the target set.  The system consists of three modes.}
\label{fig:hitting_time}
\end{figure}

Under these conditions, the worst-case hitting time $\tau(\xinit, \xtarget,\mathcal{A})$ can be computed exactly via CGP.  In particular, note that $\tau(\xinit, \xtarget,\mathcal{A})$ can be written as:
\begin{eqnarray}
\tau(\xinit, \xtarget,\mathcal{A}) = \inf_{t \geq 0} ~ t ~~~ \mbox{s.t.} ~ \sup_{\bx(0) \in \xinit, \bA \in \mathcal{A}} ~ \sum_{j=1}^n \bc^{(i)}_j \bx_j(0) \exp\big(\bA_{j,j} t\big) \leq \bd_i, ~ i=1,\dots,k. \label{eq:hittingtimecgp}
\end{eqnarray}
Each constraint here can be reformulated as a collection of CGP-representable constraints via Proposition~\ref{theo:robustgp}.  Hence, for the particular problem structure we consider, the hitting time $\tau(\xinit, \allowbreak \xtarget, \allowbreak \mathcal{A})$ can be computed exactly via CGP.

\paragraph{Example.} Consider a dynamical system with three modes that each take on values in certain ranges as follows:
\begin{equation*}
\mathcal{A} = \left\{\bA \in \R^{3 \times 3} ~|~ \bA ~ \mathrm{diagonal}, ~ \bA_{1,1} \in [-0.4, -0.45], ~ \bA_{2,2} \in [-0.5, -0.6], ~ \bA_{3,3} \in [-0.6,-0.7] \right\}.
\end{equation*}
The set of initial states is a shifted \emph{elliptope} that is contained within the nonnegative orthant:
\begin{equation*}
\xinit =\left\{\bx(0) \in \R^3_+ ~:~ \left[ \begin{array}{ccc} 1 & \bx_1(0)-3 & \bx_2(0)-3 \\ \bx_1(0)-3 & 1 & \bx_3(0)-3 \\ \bx_2(0)-3 & \bx_3(0)-3 & 1 \end{array} \right] \succeq 0\right\}.
\end{equation*}
Finally, we let the target region be a tetrahedron:
\begin{equation*}
\xtarget = \left\{\bx \in \R^3_+ ~|~ \ones' \bx \leq 1 \right\}.
\end{equation*}
For these system attributes, a few sample trajectories are shown in Figure~\ref{fig:hitting_time}. We solve the CGP \eqref{eq:hittingtimecgp} and obtain that $\tau(\xinit,\xtarget,\mathcal{A})=7.6253$ is the worst-case hitting time.

\subsection{Quantum Channel Capacity} \label{subsec:quantumcapacity}

A quantum channel transmits quantum information, and every such channel has an associated \emph{quantum capacity} that characterizes the maximum rate at which quantum information can be reliably communicated across the channel.  We refer the reader to the literature for an overview of this topic \cite{Hol1997,NieC2011,SchW1998,Sho2003}.  For our purposes, we give a mathematical description of a quantum channel, and we describe methods for obtaining bounds on the associated quantum capacity via CGP.  The input and the output of a quantum channel are quantum states that are specified by positive semidefinite matrices with unit trace.  Such matrices are called \emph{quantum density matrices}.  Formally, a quantum channel is characterized by a linear operator that maps density matrices to density matrices; the dimensions of the input and output density matrices may, in general, be different:
\begin{equation*}
\cL : \Sym(n) \rightarrow \Sym(k)
\end{equation*}
For the linear operator $\cL$ to specify a valid quantum channel, it must map positive semidefinite matrices in $\Sym(n)$ to positive semidefinite matrices in $\Sym(k)$ and it must be trace-preserving.  These properties ensure that density matrices are mapped to density matrices.  In addition, $\cL$ must satisfy a property known as complete-positivity, which is required by the postulates of quantum mechanics -- see \cite{NieC2011} for more details.  Any linear map $\cL : \Sym(n) \rightarrow \Sym(k)$ that satisfies all these properties can be expressed via matrices\footnote{In full generality, density matrices are trace-one, positive semidefinite \emph{Hermitian} matrices, and $\bA^{(j)} \in \mathbb{C}^{k \times n}$.  As with SDPs, the CGP framework can handle linear matrix inequalities on Hermitian matrices, but we stick with the real case for simplicity.} $\bA^{(j)} \in \R^{k \times n}$ as follows \cite{HelK1970}:
\begin{equation*}
\cL({\boldsymbol \rho}) = \sum_j \bA^{(j)} {\boldsymbol \rho} {\bA^{(j)}}' ~~~ \mathrm{where} ~~~ \sum_j {\bA^{(j)}}' \bA^{(j)} = \mathbf{I}.
\end{equation*}
Letting $\Sp^n$ denote the $n$-sphere, the capacity of the channel specified by $\cL : \Sym(n) \rightarrow \Sym(k)$ is defined as:
\begin{equation}
C(\cL) \triangleq \sup_{\bv^{(i)} \in \Sp^n, ~ p_i \geq 0, ~ \sum_i p_i = 1} ~ H_{vn}\left[\cL\left(\sum_i p_i \bv^{(i)} {\bv^{(i)}}' \right)\right] - \sum_i p_i H_{vn}\left[ \cL \left(\bv^{(i)} {\bv^{(i)}}'\right) \right]. \label{eq:quantumcapacity}
\end{equation}
Here $H_{vn}$ is the Von-Neumann entropy \eqref{eq:vonneumann} and the number of states $\bv^{(i)}$ is part of the optimization.

\begin{problem}
Given a quantum channel specified by linear map $\cL$, compute the capacity of the channel.
\end{problem}

We note that there are several kinds of capacities associated with a quantum channel, depending on the protocol employed for encoding and decoding states transmitted across the channel; the version considered here is the one most commonly investigated, and we refer the reader to Shor's survey \cite{Sho2003} for details of the other types of capacities.  The quantity \eqref{eq:quantumcapacity} on which we focus here is called the $C_{1,\infty}$ quantum capacity -- roughly speaking, it is the capacity of a quantum channel in which the sender cannot couple inputs across multiple uses of the channel, while the receiver can jointly decode messages received over multiple uses of the channel.  Shor also describes in \cite{Sho2003} a procedure based on nonlinear programming to provide lower bounds on this quantum capacity.  As the first step of this method, one fixes a finite set of states $\left\{\bv^{(i)}\right\}_{i=1}^m$ with each $\bv^{(i)} \in \Sp^n$ and a density matrix ${\boldsymbol \rho} \in \Sym(n)$, so that ${\boldsymbol \rho}$ is in the convex hull of $\left\{\bv^{(i)} {\bv^{(i)}}' \right\}_{i=1}^m$. With these quantities fixed, one can obtain a lower bound on $C(\cL)$ by solving the following \emph{linear program}:
\begin{eqnarray}
C\left(\cL,\left\{\bv^{(i)}\right\}_{i=1}^m, {\boldsymbol \rho} \right) = \sup_{p_i \geq 0, ~\sum_{i=1}^m p_i = 1, ~{\boldsymbol \rho} = \sum_{i=1}^m p_i \bv^{(i)} {\bv^{(i)}}'} H_{vn}(\cL({\boldsymbol \rho})) - \sum_{i=1}^m p_i H_{vn}\left(\cL\left(\bv^{(i)} {\bv^{(i)}}' \right) \right). \label{eq:shorquantumapprox1}
\end{eqnarray}
In \cite{Sho2003}, Shor also suggests local heuristics to search for better sets of states and density matrices to improve the lower bound \eqref{eq:shorquantumapprox1}.  It is clear that $C\left(\cL, \left\{\bv^{(i)}\right\}_{i=1}^m, {\boldsymbol \rho} \right)$ as computed in \eqref{eq:shorquantumapprox1} is a lower bound on $C(\cL)$.  Indeed, one can check that:
\begin{equation}
C(\cL) = \sup_{\bv^{(i)} \in \Sp^n} ~ \sup_{{\boldsymbol \rho} \in \mathrm{conv}\left\{\bv^{(i)} {\bv^{(i)}}' \right\}} ~ C\left(\cL,\left\{\bv^{(i)}\right\}, {\boldsymbol \rho} \right). \label{eq:shorquantumapprox2}
\end{equation}
Here the number of states is not explicitly denoted, and it is also a part of the optimization.

\paragraph{Solution.} We describe an improvement upon the lower bound \eqref{eq:shorquantumapprox1} by proposing a solution based on a CGP.  Specifically, we observe that for a fixed set of states $\left\{\bv^{(i)}\right\}_{i=1}^m$ the following optimization problem is a CGP:
\begin{eqnarray}
C\left(\cL, \left\{\bv^{(i)}\right\}_{i=1}^m\right) &=& \sup_{{\boldsymbol \rho} \in \mathrm{conv}\left\{\bv^{(i)} {\bv^{(i)}}' \right\}_{i=1}^m} C\left(\cL, \left\{\bv^{(i)}\right\}_{i=1}^m, {\boldsymbol \rho}\right) \label{eq:cgpquantumapprox} \\ &=& \sup_{p_i \geq 0, ~ \sum_{i=1}^m p_i = 1} ~ H_{vn}\left[\cL\left(\sum_{i=1}^m p_i \bv^{(i)} {\bv^{(i)}}' \right)\right] - \sum_{i=1}^m p_i H_{vn}\left[ \cL \left(\bv^{(i)} {\bv^{(i)}}'\right) \right]. \nonumber
\end{eqnarray}
In particular, the optimization over density matrices ${\boldsymbol \rho} \in \mathrm{conv}\left\{\bv^{(i)} {\bv^{(i)}}' \right\}_{i=1}^m$ in \eqref{eq:shorquantumapprox2} can be folded into the computation of \eqref{eq:shorquantumapprox1} at the expense of solving a CGP instead of an LP.  It is easily seen from \eqref{eq:quantumcapacity}, \eqref{eq:shorquantumapprox2}, and \eqref{eq:cgpquantumapprox} that for a fixed set of states $\left\{\bv^{(i)}\right\}_{i=1}^m$:
\begin{equation*}
C(\cL) \geq C\left(\cL, \left\{\bv^{(i)}\right\}_{i=1}^m\right) \geq C\left(\cL, \left\{\bv^{(i)}\right\}_{i=1}^m, {\boldsymbol \rho}\right).
\end{equation*}
For a finite set of states $\left\{\bv^{(i)}\right\}_{i=1}^m$, the quantity $C\left(\cL, \left\{\bv^{(i)}\right\}_{i=1}^m\right)$ in \eqref{eq:cgpquantumapprox} is referred to as the \emph{classical-to-quantum capacity} with respect to the states $\left\{\bv^{(i)}\right\}_{i=1}^m$ \cite{Hol1997,SchW1998}; the reason for this name is that $C\left(\cL, \left\{\bv^{(i)}\right\}_{i=1}^m\right)$ is the capacity of a quantum channel in which the input is ``classical'' as it is restricted to be a convex combination of the finite set $\left\{\bv^{(i)} {\bv^{(i)}}' \right\}_{i=1}^m$.

One can improve upon the bound $C\left(\cL, \left\{\bv^{(i)}\right\}_{i=1}^m\right)$ by progressively adding more states to the collection $\left\{\bv^{(i)}\right\}_{i=1}^m$.  It is of interest to develop techniques to accomplish this in a principled manner.
\begin{figure}
\centering
\includegraphics[scale=0.2]{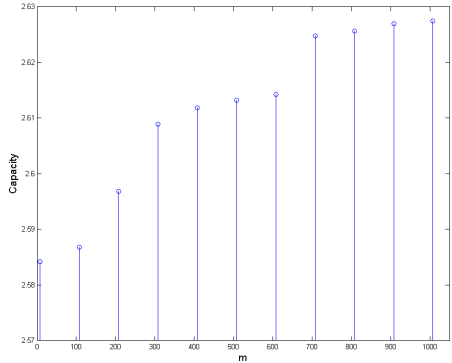}
\caption{A sequence of increasingly tighter lower bounds on the quantum capacity of the channel specified by \eqref{eq:quantumcapacityex}, obtained by computing classical-to-quantum capacities with increasingly larger collections of input states.}
\label{fig:capacity lower bound}
\end{figure}

\paragraph{Example.} We consider a quantum channel given by a linear operator $\cL: \Sym(8) \rightarrow \Sym(8)$ with:
\begin{equation}
\cL({\boldsymbol \rho}) = \bA^{(1)} {\boldsymbol \rho} {\bA^{(1)}}' + \epsilon^2 \bA^{(2)} {\boldsymbol \rho} {\bA^{(2)}}' + \bA^{(3)} {\boldsymbol \rho} {\bA^{(3)}}', \label{eq:quantumcapacityex}
\end{equation}
where $\bA^{(1)}$ is a random $8 \times 8$ diagonal matrix (entries chosen to be i.i.d. standard Gaussian), $\bA^{(2)}$ is a random $8 \times 8$ matrix (entries chosen to be i.i.d. standard Gaussian), $\epsilon \in [0,1]$, and $\bA^{(3)}$ is the symmetric square root of $\mathbf{I} - {\bA^{(1)}}' \bA^{(1)} - \epsilon^2 {\bA^{(2)}}' \bA^{(2)}$.  We scale $\bA^{(1)}$ and $\bA^{(2)}$ suitably so that $\mathbf{I} - {\bA^{(1)}}' \bA^{(1)} - \epsilon^2 {\bA^{(2)}}' \bA^{(2)}$ is positive definite for all $\epsilon \in [0,1]$.  For this quantum channel, we describe two numerical experiments.

In the first experiment, we compute the classical-to-quantum capacity of this channel with the input states $\bv^{(i)} \in \mathbb{S}^8$ for $i=1,\dots,8$ being the $8$ standard basis vectors in $\R^{8}$.  In other words, the input density ${\boldsymbol \rho} \in \Sym(8)$ is given by a diagonal matrix.  We add unit vectors in $\R^{8}$ at random to this collection, and we plot the increase in Figure~\ref{fig:capacity lower bound} in the corresponding classical-to-quantum channel capacity \eqref{eq:cgpquantumapprox} -- in each case, the capacity is evaluated at $\epsilon = 0.5$.  In this manner, CGPs can be used to provide successively tighter lower bounds for the capacity of a quantum channel.

Next, we compare the classical-to-quantum capacity of the channel \eqref{eq:quantumcapacityex} -- input states $\bv^{(i)} \in \mathbb{S}^8$ for $i=1,\dots,8$ being the $8$ standard basis vectors in $\R^{8}$ -- to the capacity of a purely classical channel induced by the quantum channel \eqref{eq:quantumcapacityex}.  Specifically, let $\mathfrak{P}_{\mathrm{diag}} : \Sym(8) \rightarrow \Sym(8)$ denote an operator that projects a matrix onto the space of diagonal matrices, i.e., zeros out the off-diagonal entries.  Then we consider the following classical channel induced by the quantum channel \eqref{eq:quantumcapacityex} for a diagonal density matrix ${\boldsymbol \rho}$:
\begin{equation}
\cL_{\mathrm{classical}}({\boldsymbol \rho}) \triangleq \mathfrak{P}_{\mathrm{diag}}(\cL({\boldsymbol \rho})).  \label{eq:inducedclassical}
\end{equation}
Both the inputs and outputs of this channel are diagonal density matrices, and they can therefore be interpreted as classical distributions.  Figure~\ref{fig:capacity comparison} shows two plots for $\epsilon$ ranging from $0$ to $1$ of both the classical-to-quantum capacity of \eqref{eq:quantumcapacityex} and the classical capacity of the induced classical channel \eqref{eq:inducedclassical}.  Note that for $\epsilon = 0$ the output of the operator \eqref{eq:quantumcapacityex} is diagonal if the input is given by a diagonal density matrix.  Therefore, the two curves coincide at $\epsilon = 0$ in Figure~\ref{fig:capacity comparison}.  For larger values of $\epsilon$, the classical-to-quantum channel is greater than the capacity of the induced classical channel.

\begin{figure}
\centering
\includegraphics[scale=0.4]{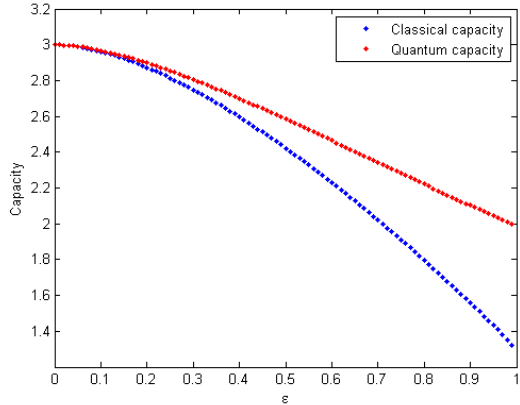}
\caption{Comparison of a classical-to-quantum capacity of the quantum channel specified by \eqref{eq:quantumcapacityex} and the classical capacity of a classical channel induced by the quantum channel given by \eqref{eq:quantumcapacityex}.}
\label{fig:capacity comparison}
\end{figure}

\subsection{Other Applications involving Von-Neumann Entropy Optimization} \label{subsec:vonneumann}

Several previously proposed methods in varied application domains can be viewed as special cases of CGP.  Specifically, the problem of maximizing Von-Neumann entropy of a positive semidefinite matrix subject to affine constraints on the matrix has been recognized as a useful conceptual and computational approach in a number of areas beyond the quantum communication setting of Section~\ref{subsec:quantumcapacity}.  We briefly discuss some of these applications here.  To be clear, although the proposed solutions in these domains can be viewed as particular instances of CGP, the framework developed in our paper is substantially more general and CGPs facilitate many new applications as described above.

\paragraph{Quantum state tomography} Quantum state tomography arises in the characterization of optical systems and in quantum computation \cite{GonLGCVM2013}.  The goal is to reconstruct a quantum state specified by a density matrix (symmetric positive semidefinite with unit trace) given partial information about the state.  Such information is typically provided via \emph{measurements} of the state, which can be viewed as linear functionals of the underlying density matrix.  As measurements can frequently be expensive to obtain, this is a type of inverse problem in which one must choose among the many density matrices that satisfy the limited measurement information that is acquired.  One proposal for tomography, based on the original work of Jaynes \cite{Jay1957} on the maximum entropy principle, is to find the Von-Neumann-entropy-maximizing density matrix among all density matrices consistent with the measurements \cite{GonLGCVM2013} -- the rationale behind this approach is that the entropy-maximizing matrix makes the least assumptions about the quantum state beyond the constraints imposed by the acquired measurements.  Using this method, the optimal reconstructed quantum state can be computed as the solution of a convex program that is a special instance of a CGP.

\paragraph{Equilibrium densities in statistical physics} In statistical mechanics, a basic objective is to investigate the properties of a system at equilibrium given information about macroscopic attributes of the system such as energy, number of particles, volume, and pressure.  In mathematical terms, the situation is quite similar to the previous setting with quantum state tomography -- specifically, the system is described by a density matrix (as in quantum state tomography, this matrix is symmetric positive semidefinite with unit trace), and the macroscopic properties can be characterized as constraints on linear functionals of this density matrix.  The Massieu-Planck extremum principle in statistical physics states that the system at equilibrium is given by the Von-Neumann-entropy-maximizing density matrix among all density matrices that satisfy the specified constraints \cite{ScoJ1977}.  As before, this equilibrium density can be computed efficiently via CGP.

\paragraph{Kernel learning} A commonly encountered problem in machine learning is to measure similarity or affinity between entities that may not belong to a Euclidean space, e.g., text documents.  A first step in such settings is to specify coordinates for the entities in a linear vector space after performing some nonlinear mapping, and then subsequently computing distances in the linear space.  Kernel methods approach this problem implicitly by working directly with inner-products between pairs of entities, thus combining the nonlinear mapping and the distance computation in a single step.  Therefore, given a finite collection of entities, a kernel is a symmetric positive definite matrix in which each entry specifies the inner-product between the corresponding pair of entities.  This approach leads to the natural question of learning a good kernel matrix.  Kulis et al \cite{KulSD2009} propose to minimize the following \emph{Von-Neumann relative entropy} between a kernel $\bM \in \Sym(n)$ (the decision variable) and a given reference kernel $\bM_0 \in \Sym(n)$ (specifying prior knowledge about the affinities among the $n$ entities) with $\bM_0 \succ \bzero$:
\begin{equation*}
D_{vn}(\bM, \bM_0) = - H_{vn}(\bM) - \mathrm{trace}\big(\bM \log(\bM_0)\big).
\end{equation*}
This minimization is subject to the decision variable $\bM$ being positive definite as well as constraints on $\bM$ that impose bounds on distances between pairs of entities in the linear space.  We refer the reader to \cite{KulSD2009} for more details, and for the virtues of minimizing the Von-Neumann relative entropy as a method for learning kernel matrices. In the context of this paper, we note that this problem can be viewed as a Von-Neumann entropy maximization problem subject to linear constraints, and it is therefore again a special instance of CGP.

In summary, we note that each of these problems can be viewed as a very special case of a CGP.  As such, the approach described in Section~\ref{subsec:cgpsolver} provides an effective method for solving these problems.  Although we do not discuss the details here, it is possible to improve upon the general techniques in Section~\ref{subsec:cgpsolver} by exploiting the particular structure of Von-Neumann entropy maximization problems subject to affine and semidefinite constraints.

\section{Discussion} \label{sec:discussion}

In this paper we introduced and investigated CGPs, a new class of structured convex optimization problems obtained by blending aspects of GPs and conic programs such as SDPs.  CGPs consist of conic constraints, affine constraints, and constraints on sums of exponential and affine functions.  The dual of a CGP can be expressed in a canonical manner as the joint maximization of the negative relative entropy between two nonnegative vectors, subject to affine and conic constraints.  CGPs can be solved efficiently in practice, and they expand significantly upon the expressive power of GPs and SDPs.  We demonstrate this point by describing a range of problems that are outside the scope of SDPs and GPs alone but for which CGPs provide effective solutions.  The application domains include permanent maximization, quantum capacity computation, hitting-time estimation in dynamical systems, and robust solutions of GPs.


There are several avenues for further research suggested by this paper.  It is of interest to develop efficient first-order methods to solve CGPs in order to scale to large problem instances.  Such massive size problems are especially prevalent in data analysis tasks, and are of interest in settings such as kernel learning.  On a related note, there exists a vast literature on exploiting the structure of a particular problem instance of an SDP or a GP -- e.g., sparsity in the problem parameters -- which can result in significant computational speedups in practice in the solution of these problems.  A similar set of techniques would be useful and relevant in all of the applications described in this paper.  Finally, we seek a deeper understanding of the geometry of CGP-representable convex sets such as their face structure, building on the extensive research in recent years on the geometry of SDP-representable convex sets \cite{BlePT2013,GouPT2013,HelN2009}.

\section*{Acknowledgements}
The authors would like to thank Pablo Parrilo for many enlightening conversations, and Leonard Schulman for pointers to the literature on Von-Neumann entropy.

\bibliography{cgprefs}

\end{document}